\def\titlerunning#1{\gdef\titrun{#1}}
\def\author#1{\gdef\autrun{\def\and{\unskip, }#1}\gdef\@author{#1}}
\def\address#1{{\def\and{\\\hspace*{18pt}}\renewcommand{\thefootnote}{}%
\footnote {#1}}%
\markboth{\autrun}{\titrun}}
\def\email#1{\hspace*{4pt}{\em e-mail}: #1}
\def\MSC#1{{\renewcommand{\thefootnote}{}%
\footnote{\emph{Mathematics Subject Classification (2020):} #1}}}
\def\keywords#1{\par\medskip
\noindent\textbf{Keywords:} #1}
\newtheorem{theorem}{Theorem}[section]
\newtheorem{prop}[theorem]{Proposition}
\newtheorem{cor}[theorem]{Corollary}
\newtheorem{lemma}[theorem]{Lemma}
\newtheorem{defin}[theorem]{Definition}
\theoremstyle{definition}
\newtheorem{prob}[theorem]{Problem}
\newtheorem{remark}[theorem]{Remark}
\numberwithin{equation}{section}
\def\s{\mathfrak s}
\def\cA{\mathcal A}
\def\cC{\mathcal C}
\def\cE{\mathcal E}
\def\cH{\mathcal H}
\def\cL{\mathcal L}
\def\cO{\mathcal O}
\def\cP{\mathcal P}
\def\cQ{\mathcal Q}
\def\cR{\mathcal R}
\def\cT{\mathcal T}
\def\cU{\mathcal U}
\def\cW{\mathcal W}
\def\cX{\mathcal X}
\def\PG{{\rm PG}}
\def\F{{\mathbb F}}
\def\PGL{{\rm PGL}}
\def\PSp{{\rm PSp}}
\def\PGU{{\rm PGU}}
\def\i{\boldsymbol \iota}
\def\y{y_{q^2-q+1}}
\def\z{y_{q^2+1}}
\def\t{y_{q^2+q+1}}
\begin{document}


\baselineskip=16pt

\titlerunning{}

\title{On large partial ovoids of symplectic and Hermitian polar spaces}

\author{Michela Ceria
\and
Jan De Beule
\and 
Francesco Pavese
\and 
Valentino Smaldore
}

\date{}

\maketitle
\address{M. Ceria, F. Pavese: Department of Mechanics, Mathematics and Management, Polytechnic University of Bari, Via Orabona 4, 70125 Bari, Italy; \email{\{michela.ceria, francesco.pavese\}@poliba.it}
\and
J. De Beule: Department of Mathematics and Data Science, Vrije Universiteit Brussel, Pleinlaan 2, 1050 Brussel, Belgium and Department of Mathematics: Analysis, Logic and Discrete Mathematics, Ghent University, Krijgslaan 281 (S8), 9000 Gent, Belgium; \email{jan@debeule.eu}, \url{https://orcid.org/0000-0001-5333-5224}
\and
V. Smaldore: Department of Mathematics, Computer Science and Economics, University of Basilicata, Contrada Macchia Romana, 85100, Potenza, Italy; \email{valentino.smaldore@unibas.it}
}


\MSC{Primary 51E20. Secondary 05B25.}

\begin{abstract}
In this paper we provide constructive lower bounds on the sizes of the largest partial ovoids of the symplectic polar spaces $\cW(3, q)$, $q$ odd square, $q \not\equiv 0 \pmod{3}$, $\cW(5, q)$ and of the Hermitian polar spaces $\cH(4, q^2)$, $q$ even or $q$ odd square, $q \not\equiv 0 \pmod{3}$, $\cH(6, q^2)$, $\cH(8, q^2)$.

\keywords{Partial ovoid, symplectic polar space; Hermitian polar space.}
\end{abstract}

\section{Introduction}

Let $\cP$ be a finite classical polar space, i.e., $\cP$ arises from a vector space of finite dimension over a finite field equipped with a non-degenerate reflexive sesquilinear or quadratic form. A projective subspace of maximal dimension contained in $\cP$ is called a {\em generator} of $\cP$. A {\em (partial) ovoid} $\cO$ of a polar space $\cP$ is a set of points of $\cP$ such that every generator of $\cP$ contains (at most) one point of $\cO$. A partial ovoid is said to be {\em maximal} if it is maximal with respect to set-theoretic inclusion. 

Here we focus on the symplectic polar space $\cW(2n-1, q)$ and the Hermitian polar space $\cH(2n, q^2)$, consisting of the absolute projective subspaces with respect to a non-degenerate symplectic polarity of $\PG(2n-1, q)$ and a non-degenerate unitary polarity of $\PG(2n, q^2)$, respectively. It is well known that $\cW(2n-1, q)$ has ovoids if and only if $n =2$ and $q$ is even, whereas $\cH(2n, q^2)$ does not possess ovoids. Hence the question of the largest (maximal) partial ovoids of $\cW(2n-1, q)$, $(n, q) \ne (2, 2^h)$, and $\cH(2n, q^2)$ naturally arises. Constructions of maximal partial ovoids of $\cW(3, q)$, $q$ even, of size an integer between about $q^2/10$ and $9q^2/10$ or of size $q^2-hq+1$, $1 \le h \le q/2$, have been provided in \cite{RS, T}. The situation is somewhat different for $q$ odd where the lack of examples is transparent. In \cite{T} Tallini proved that a partial ovoid of $\cW(3, q)$, $q$ odd, has size at most $q^2-q+1$ and constructed a maximal partial ovoid of $\cW(3, q)$ of size $2q+1$. Regarding symplectic polar spaces in higher dimensions an upper bound on the size of the largest partial ovoid has been provided in \cite{BKMS1} and, if $q$ is even, a partial ovoid of an elliptic or hyperbolic quadric is also a partial ovoid of a symplectic polar space. 

As for $\cH(2n, q^2)$, an upper bound on the size of the largest partial ovoid can be found in \cite{BKMS}. In particular, a partial ovoid has at most $q^5-q^4+q^3+1$ points if $n = 2$. The largest known example of a maximal partial ovoid of $\cH(2n,q^2)$, $n = 2, 3$, occurs when $q=3^h$ and has size $q^4+1$ \cite{MPS}. A straightforward check shows that a non-degenerate plane section of $\cH(2n,q^2)$ is an example of maximal partial ovoid of $\cH(2n,q^2)$ of size $q^3+1$. Other examples of maximal partial ovoids of $\cH(4,q^2)$ of size $2q^3+q^2+1$ have been constructed in \cite{CP} and of size $q^3+1$ in \cite{CS, MPS}.

In Section~\ref{symp}, partial ovoids of symplectic polar spaces are studied. We show the existence of a partial ovoid of $\cW(3, q)$, $q$ an odd square, $q \not\equiv 0 \pmod{3}$, of size $(q^{3/2} +3q - q^{1/2}+3)/3$. It is obtained by glueing together a twisted cubic $\cC$ of $\PG(3, q)$ and an orbit of a subgroup of $\PSp(4, q)$ isomorphic to $\PGL(2, q^{1/2})$ stabilizing $\cC$. Next maximal partial ovoids of $\cW(5, q)$ of size $q^2+q+1$ and of size $2q^2-q+1$, if $q$ is even, are exhibited. 

In Section~\ref{Herm} we introduce the notion of {\em tangent-set} of a Hermitian variety $\cH(2n-1, q^2)$, that is a set $\cT$ of points of $\PG(2n-1, q^2)$ such that every line that is either tangent or contained in $\cH(2n-1, q^2)$ has at most one point in common with $\cT$. In particular we show that starting from a partial ovoid of $\cW(2n-1, q)$ of size $x$, it is possible to obtain a tangent-set of $\cH(2n-1, q^2)$ of size roughly $x q$, which in turn gives rise to a partial ovoid of $\cH(2n, q^2)$ of size roughly $x q^2$. Applying this construction technique to the known (partial) ovoids of $\cW(2n-1, q)$, $n \in \{2,3,4\}$, the following are obtained: maximal partial ovoids of $\cH(4, q^2)$, $q$ even, of size $q^4+1$ and of size $q^4-q^3+q+1$, a partial ovoid of $\cH(4, q^2)$, $q$ an odd square, $q \not\equiv 0 \pmod{3}$, of size $(q^{7/2}+3q^3-q^{5/2}+3q^2)/3$, partial ovoids of $\cH(6, q^2)$ of size $q^4+q^3+1$ and of size $2q^4-q^3+1$, if $q$ is even, and a partial ovoid of $\cH(8, q^2)$ of size $q^5+1$. Table~\ref{Tab1} and Table~\ref{Tab2} summarize old and new results regarding large partial ovoids of symplectic and Hermitian polar spaces in small dimensions.

\begin{table}[H]
$
\begin{tabular}{c||c|c}
 & lower bound & upper bound \\
\hline
\hline
$\cW(3, q)$, $q$ even & $q^2+1$ & $q^2+1$ \\
\hline
	& {\boldmath $(q^{3/2} +3q - q^{1/2}+3)/3$}, \\
$\cW(3, q)$, $q$ odd & $q = p^{2h}$, $p \ne 3$, & \\
	& & $q^2-q+1$ \cite{T} \\
	& $2q+1$, & \\
	& $q = p^{2h+1}$ or $q = 3^h$ \cite{T} & \\
\hline
	& {\boldmath $2q^2-q+1$}, $q$, even & \\
	& & $\frac{q\sqrt{5q^4+6q^3+7q^2+6q+1}-q^3-q^2-q+2}{2}$ \cite{BKMS1} \\
$\cW(5, q)$ & {\boldmath $q^2+q+1$}, $q$ odd & \\
	& & \\
	& $7$, $q = 2$ & $7$, $q = 2$ \\
\hline
	& & $q^4-q^3-q(\sqrt{q}-1)(q-\sqrt{q}+1)+3$, \\
$\cW(7, q)$ & $q^3+1$ \cite{C, CK} & $q > 2$ \cite{BKMS1} \\
	& & \\
	& & $9$, $q = 2$ \\  
\hline
\end{tabular}
$
\caption{\label{Tab1}Large partial ovoids of $\cW(2n-1, q)$, $n \in \{2, 3, 4\}$.}
\end{table}

\begin{table}[H]
$
\begin{tabular}{c||c|c}
 & lower bound & upper bound \\
\hline
\hline
	& {\boldmath $q^4+1$}, $q = 2^h$ or $q = 3^h$ \cite{MPS} & \\
	& & \\
$\cH(4, q^2)$ & {\boldmath $(q^{7/2}+3q^3-q^{5/2}+3q^2)/3$}, & $q^5-q^4+q^3+1$ \cite{BKMS} \\
	& $q = p^{2h}$, $p$ odd, $p \ne 3$ & \\
	& & \\
	& $2q^3+q^2+1$, & \\
	& $q = p^{2h+1}$, $p \ne 2, 3$ \cite{CP} & \\
\hline
	& {\boldmath $2q^4-q^3+1$}, $q$, even & \\
$\cH(6, q^2)$ & & $q^7-q^6+q^5-q^3+2$ \cite{BKMS}\\
	& {\boldmath $q^4+q^3+1$}, $q$ odd & \\
\hline
$\cH(8, q^2)$ & {\boldmath $q^5+1$} & $q^9-q^8+q^7-q^5-q^3+q^2+1$ \cite{BKMS}\\
\hline	
\end{tabular}
$
\caption{\label{Tab2}Large partial ovoids of $\cH(2n, q^2)$, $n \in \{2,3,4\}$.}
\end{table}

\section{Preliminaries}\label{pre}

Let $q$ be a prime power and let $\F_q$ be the finite field of order $q$. Let $\PG(n, q)$ be the $n$-dimensional projective space over $\F_{q}$ equipped with homogeneous projective coordinates $(X_1, X_2, \dots, X_{n+1})$. Denote by $U_i$ the point having $1$ in the $i$-th position and $0$ elsewhere. A non-degenerate symplectic or unitary polarity of $\PG(n, q^2)$ is induced by a non-degenerate alternating or Hermitian form on the underlying vector space. 

The symplectic polar space $\cW(2n-1, q)$ is formed by the projective subspaces that are absolute with respect to a non-degenerate symplectic polarity $\s$ of $\PG(2n-1, q)$. It is left invariant by the group of projectivities $\PSp(4,q)$. It consists of all the points of $\PG(2n-1, q)$ and of $(q+1) \dots (q^{n}+1)$ generators. Through every point $P$ of $\PG(2n-1, q)$ there pass $(q+1) \dots (q^{n-1}+1)$ generators and these generators all lie in a hyperplane. The hyperplane containing these generators is the polar hyperplane $P^\s$ of $P$ with respect to the symplectic polarity $\s$ defining $\cW(2n-1, q)$.

A (non-degenerate) Hermitian variety $\cH(n, q^2)$ consists of the absolute points of a unitary polarity of $\PG(n, q^2)$. It has $\left(q^{n+1}+(-1)^n\right)\left(q^n-(-1)^n\right)/\left(q^2-1\right)$ points. Subspaces of maximal dimension contained in $\cH(n, q^2)$ are $\lfloor \frac{n-1}{2} \rfloor$-dimensional projective spaces and are called {\em generators}. There are either $(q + 1)(q^3 + 1) \dots (q^n + 1)$ or $(q^3 + 1)(q^5 + 1) \dots (q^{n+1} + 1)$ generators in $\cH(n, q^2)$, depending on whether $n$ is odd or even, respectively.  A line of $\PG(n, q^2)$ meets $\cH(n, q^2)$ in $1$, $q+1$ or, if $n \ge 3$, in $q^2 + 1$ points. The latter lines are the generators of $\cH(n, q^2)$ if $n \in \{3, 4\}$; lines meeting $\cH(n, q^2)$ in one or $q + 1$ points are called {\em tangent lines} or {\em secant lines}, respectively. Through a point $P$ of $\cH(n, q^2)$ there pass $\left(q^{n-1}+(-1)^{n-2}\right)\left(q^{n-2}-(-1)^{n-2}\right)/\left(q^2-1\right)$ generators and these generators are contained in a hyperplane. The hyperplane containing these generators is the polar hyperplane of $P$ with respect to the unitary polarity of $\PG(n, q^2)$ defining $\cH(n, q^2)$ and it is also called the {\em tangent hyperplane} to $\cH(n, q^2)$ at $P$. The tangent lines through $P$ are precisely the remaining lines of its polar hyperplane that are incident with $P$. If $P \notin \cH(n,q^2)$ then the polar hyperplane of $P$ is a hyperplane of $\PG(n, q^2)$ meeting $\cH(n, q^2)$ in a non-degenerate Hermitian variety $\cH(n-1, q^2)$ and it is said to be {\em secant} to $\cH(n, q^2)$. The stabilizer of $\cH(n, q^2)$ in $\PGL(n+1, q^2)$ is the group $\PGU(n+1, q^2)$. Recall that the symplectic polar space $\cW(2n-1, q)$ can be embedded in $\cH(2n-1, q^2)$.

\section{Partial ovoids of symplectic polar spaces}\label{symp}

\subsection{${\mathcal W}(3, q)$, $q$ odd square, $q \not\equiv 0 \pmod{3}$}

Let $\cW(3, q)$, $q \not\equiv 0 \pmod{3}$, be the symplectic polar space consisting of the subspaces of $\PG(3, q)$ induced by the totally isotropic subspaces of $\F_{q}^4$ with respect to the non-degenerate alternating form $\beta$ given by 
\begin{align*}
x_1 y_4 + x_2 y_3 - x_3 y_2 - x_4 y_1. 
\end{align*} 
Denote by $\s$ the symplectic polarity of $\PG(3, q)$ defining $\cW(3, q)$. Let $\cC$ be the twisted cubic of $\PG(3, q)$ consisting of the $q+1$ points $\{P_t \mid t \in \F_q\} \cup \{(0,0,0,1)\}$, where $P_t = (1, -3 t, t^2, t^3)$. It is well known that a line of $\PG(3, q)$ meets $\cC$ in at most $2$ points and a plane shares with $\cC$ at most $3$ points (i.e., $\cC$ is a so called {\em $(q+1)$-arc}). A line of $\PG(3,q)$ joining two distinct points of $\cC$ is called a {\em real chord} and there are $q(q+1)/2$ of them. Let $\bar{\cC} = \{P_t \mid t \in \F_{q^2}\} \cup \{(0,0,0,1)\}$ be the twisted cubic of $\PG(3, q^2)$ which extends $\cC$ over $\F_{q^2}$. The line of $\PG(3, q^2)$ obtained by joining $P_t$ and $P_{t^q}$, with $t \notin \F_q$, meets the canonical Baer subgeometry $\PG(3, q)$ in the $q+1$ points of a line skew to $\cC$. Such a line is called an {\em imaginary chord} and they are $q(q-1)/2$ in number. If $r$ is a (real or imaginary) chord, then the line $r^{\s}$ is called ({\em real} or {\em imaginary}) {\em axis}. Also, for each point $P$ of $\cC$, the line $\ell_{P} = \langle P, P' \rangle$, where $P'$ equals $(0, -3, 2t, 3t^2)$ or $U_3$ if $P = P_t$ or $P = U_4$, respectively, is called the {\em tangent} line to $\cC$ at $P$. With each point $P_t$ (resp. $U_4$) of $\cC$ there corresponds the {\em osculating plane} $P_t^{\s}$ (resp. $U_4^{\s}$) with equation $t^3 X_1 + t^2 X_2 + 3t X_3 - X_4 = 0$ (resp. $X_1 = 0$), meeting $\cC$ only at $P_t$ (resp. $U_4$) and containing the tangent line. Hence the $q+1$ lines tangent  to $\cC$ are generators of $\cW(3, q)$ and they form a regulus $\cR$ if $q$ even. Every point of $\PG(3, q) \setminus \cC$ lies on exactly one chord or a tangent of $\cC$. For more properties and results on $\cC$ the reader is referred to \cite[Chapter 21]{H2}. Let $G$ be the group of projectivities of $\PG(3, q)$ stabilizing $\cC$. Then $G \simeq \PGL(2, q)$ whenever $q \ge 5$, and elements of $G$ are induced by the matrices
\begin{align*} 
M_{a,b,c,d} = \begin{pmatrix}
a^3 & -a^2 b & 3 a b^2 & b^3 \\
-3a^2 c & a^2 d + 2 abc & -3b^2 c -6 abd & -3b^2 d \\
a c^2 & \frac{-bc^2 - 2 acd}{3} & ad^2 + 2 bcd & b d^2 \\
c^3 & - c^2 d & 3 c d^2 & d^3 \\
\end{pmatrix},
\end{align*}
where $a,b,c,d \in \F_q$, $ad-bc \ne 0$. The group $G$ leaves invariant $\cW(3, q)$ since $M_{a,b,c,d}^t J M_{a,b,c,d} = (ad-bc)^3 J$, where   
\begin{align*}
& J = \begin{pmatrix}
0 & 0 & 0 & 1 \\
0 & 0 & 1 & 0 \\
0 & -1 & 0 & 0 \\
-1 & 0 & 0 & 0 \\
\end{pmatrix}
\end{align*} 
is the Gram matrix of $\beta$. 

{\bf Assume $q$ to be an odd square.} Here we show the existence of a partial ovoid of $\cW(3, q)$ obtained by glueing together the twisted cubic $\cC$ of $\PG(3, q)$ and an orbit of size $\sqrt{q}(q-1)/3$ of a subgroup $G_{\epsilon}$ of $\PSp(4, q) \cap G$ isomorphic to $\PGL(2, \sqrt{q})$ stabilizing $\cC$. In particular such a subgroup fixes a twisted cubic $\cC_{\epsilon} = \cC \cap \Sigma_{\epsilon}$ of a Baer subgeometry $\Lambda_{\epsilon}$ of $\PG(3, q)$. 

\fbox{$\sqrt{q} \equiv 1 \pmod{3}$}

Assume $\sqrt{q} \equiv 1 \pmod{3}$. Let $\cC_{1} = \{P_t \mid t \in \F_{\sqrt{q}} \} \cup \{U_4\}$. Thus $\cC_1 \subset \cC$ is a twisted cubic of the canonical Baer subgeometry $\Lambda_{1} = \PG(3, \sqrt{q})$ of $\PG(3, q)$. The group $G_1$ of projectivities stabilizing $\cC_1$ is isomorphic to $\PGL(2, \sqrt{q})$ and it is induced by the matrices
\begin{align*}
& M_{a,b,c,d}, a,b,c,d \in \F_{\sqrt{q}}, ad-bc \ne 0.
\end{align*}
Let $R = U_1 + x U_4$, for a fixed $x \in \F_{q} \setminus \F_{\sqrt{q}}$ where $x$ is not a cube in $\F_{q}$. Set 
\begin{align*}
& \cO_1 = R^{G_1} = \{(a^3+xb^3, -3a^2c-3xb^2d, ac^2+xbd^2, c^3+xd^3) \mid a,b,c,d \in \F_{\sqrt{q}}, ad-bc \ne 0\}. 
\end{align*}
\begin{lemma}
The set $\cO_1$ is a partial ovoid of $\cW(3, q)$ of size $\sqrt{q}(q-1)/3$.
\end{lemma}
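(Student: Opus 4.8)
The plan is to read off $|\cO_1|$ from the orbit--stabilizer theorem (so I must show the stabilizer of $R$ in $G_1$ has order $3$), and to prove the partial ovoid property by reducing it to a single incidence condition on $R$ that is then settled by Galois descent.

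First the reduction. Generators of $\cW(3,q)$ are the totally isotropic lines $\ell$, so $\ell\subseteq\ell^{\s}$, and if $R\in\ell$ then $\ell\subseteq\ell^{\s}\subseteq R^{\s}$. Since $G_1\le G$ leaves $\cW(3,q)$ invariant and $\cO_1=R^{G_1}$ is a single $G_1$-orbit, if some generator contained two points of $\cO_1$ then a suitable element of $G_1$ would move it to a generator $\ell$ through $R$ still containing a point $R^{g}\ne R$ of $\cO_1$, whence $R^{g}\in R^{\s}$. So it suffices to show $\cO_1\cap R^{\s}=\{R\}$. From the form of $\beta$ the polar plane is $R^{\s}\colon X_4=xX_1$, so the point $(a^{3}+xb^{3},\,-3a^{2}c-3xb^{2}d,\,ac^{2}+xbd^{2},\,c^{3}+xd^{3})$ of $\cO_1$ lies in $R^{\s}$ precisely when
\[
c^{3}+x(d^{3}-a^{3})-x^{2}b^{3}=0 ;
\]
call this relation $(\ast)$.

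Now I would use that $a,b,c,d\in\F_{\sqrt{q}}$ while $x\in\F_q\setminus\F_{\sqrt{q}}$ has degree $2$ over $\F_{\sqrt{q}}$. Applying to $(\ast)$ the Frobenius $y\mapsto y^{\sqrt{q}}$, which fixes $\F_{\sqrt{q}}$ and maps $x$ to its conjugate $x^{\sqrt{q}}\ne x$, shows that $x$ and $x^{\sqrt{q}}$ are both roots of $f(T)=-b^{3}T^{2}+(d^{3}-a^{3})T+c^{3}\in\F_{\sqrt{q}}[T]$. If $b\ne0$, then $f$ is a genuine quadratic whose two roots are precisely $x$ and $x^{\sqrt{q}}$, so by Vieta the norm $N_{\F_q/\F_{\sqrt{q}}}(x)=x\cdot x^{\sqrt{q}}=-c^{3}/b^{3}=(-c/b)^{3}$ is a cube in $\F_{\sqrt{q}}^{*}$. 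This contradicts the hypothesis once one records the elementary lemma that, because $3\mid\sqrt{q}-1$, an element $x\in\F_q^{*}$ whose norm is a cube in $\F_{\sqrt{q}}^{*}$ is itself a cube in $\F_q^{*}$ --- a short discrete-logarithm check in $\F_q^{*}$: writing $x=g^{i}$ for a generator $g$ and $\F_{\sqrt{q}}^{*}=\langle g^{\sqrt{q}+1}\rangle$, both cube conditions unwind to $3\mid i$. If instead $b=0$, then $f$ is linear or constant with the two distinct roots $x,x^{\sqrt{q}}$, forcing $d^{3}-a^{3}=0$ and hence $c=0$; as $ad-bc=ad\ne0$ we have $a\ne0$ and the point in question is $(a^{3},0,0,a^{3}x)=R$. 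Therefore $\cO_1\cap R^{\s}=\{R\}$ and $\cO_1$ is a partial ovoid. For the size, the same dichotomy applied under the stronger hypothesis $M_{a,b,c,d}R\sim R$ (which in particular puts the parameters on $(\ast)$) identifies the stabilizer of $R$ in $G_1$ with the set of projectivities induced by the diagonal matrices $M_{a,0,0,d}$ with $(d/a)^{3}=1$; since $\sqrt{q}\equiv1\pmod3$ there are three cube roots of unity in $\F_{\sqrt{q}}$, giving three distinct such projectivities, so the stabilizer of $R$ has order $3$ and $|\cO_1|=|\PGL(2,\sqrt{q})|/3=\sqrt{q}(q-1)/3$.

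I expect the Galois descent and the incidence bookkeeping to be routine; the substantive ingredient, and the one carrying the hypotheses $\sqrt{q}\equiv1\pmod3$ and ``$x$ not a cube in $\F_q$'', is the lemma relating cube-ness of $x$ in $\F_q$ to cube-ness of its norm in $\F_{\sqrt{q}}$. A secondary care is that a point of $\cO_1$ may admit several parametrizations $M_{a,b,c,d}R$, so $(\ast)$ has to be analyzed for all parameter values rather than one normalized choice; the degenerate branch $b=0$ of that analysis is also exactly what delivers the stabilizer of $R$.
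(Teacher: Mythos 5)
Your proposal is correct and follows essentially the same route as the paper: you reduce to the single incidence condition $c^{3}+x(d^{3}-a^{3})-x^{2}b^{3}=0$ (the paper's $A(x)=0$), and your Vieta argument on the quadratic with roots $x,x^{\sqrt q}$ is exactly the paper's elimination of $A(x)\pm A(x)^{\sqrt q}$, leading to the same contradiction $x^{\sqrt q+1}=-c^{3}/b^{3}$ being a cube. The only difference is that you spell out the order-$3$ stabilizer computation, which the paper merely asserts.
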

\begin{proof}
Since the stabilizer of $R$ in $G_1$ has order $3$ and $|G_1| = \sqrt{q}(q-1)$, by applying the Orbit-Stabilizer Theorem it follows that $\cO_1$ has the required size. In order to show that $\cO_1$ is a partial ovoid of $\cW(3, q)$, it is enough to see that the line joining $R$ and a further point $R^g$ of $\cO_1$ is not a generator of $\cW(3, q)$. Here $g \in G_1$ is induced by $M_{a,b,c,d}$. Assume by contradiction that this is not the case, then there are $a,b,c,d \in \F_{\sqrt{q}}$, with $ad-bc \ne 0$, such that $A(x) = 0$, where
\begin{align*}
& A(x) = c^3 + x d^3 -x a^3 -x^2 b^3.
\end{align*}
Hence $A(x) + A(x)^{\sqrt{q}} = A(x) - A(x)^{\sqrt{q}} = 0$, that is
\begin{align*}
& 2c^3 + (d^3-a^3) (x+x^{\sqrt{q}}) - (x^2+x^{2\sqrt{q}}) b^3 = 0, \\
& d^3 - a^3 -(x+x^{\sqrt{q}}) b^3 = 0.
\end{align*}  
If $b = 0$, then $a^3 = d^3$ and $c = 0$. Thus $g$ fixes $R$, i.e., $R = R^g$. If $b \ne 0$, then the previous equations imply
\begin{align*}
x^{\sqrt{q}+1} = - \frac{c^3}{b^3},
\end{align*}    
that is a contradiction since $x^{\sqrt{q}+1}$ is not a cube in $\F_{\sqrt{q}}$. Indeed $x$ is not a cube in $\F_{q}$ and $(\sqrt{q}+1, 3) = 1$.
\end{proof}

\fbox{$\sqrt{q} \equiv -1 \pmod{3}$} 

Assume $\sqrt{q} \equiv -1 \pmod{3}$. Let $\cC_{-1} = \{P_t \mid t \in \F_{q}, t^{\sqrt{q}+1} = 1 \}$. Thus $\cC_{-1} \subset \cC$ is a twisted cubic of the Baer subgeometry 
\begin{align*}
& \Lambda_{-1} = \{(\alpha, -3\beta, \beta^q, \alpha^q) \mid \alpha, \beta \in \F_{q^2}, (\alpha, \beta) \ne (0, 0)\} \simeq \PG(3, \sqrt{q})
\end{align*}
of $\PG(3, q)$. In this case the group $G_{-1}$ of projectivities stabilizing $\cC_{-1}$ is isomorphic to $\PGL(2, \sqrt{q})$ and it is induced by the matrices
\begin{align*}
& M_{a,b,c,d}, a,b,c,d \in \F_{q}, ad-bc \ne 0, ab^{\sqrt{q}} - cd^{\sqrt{q}} = 0, a^{\sqrt{q}+1} + b^{\sqrt{q}+1} - c^{\sqrt{q}+1} - d^{\sqrt{q}+1} = 0.
\end{align*}
Let $S = U_1 + x U_4$, for a fixed $x \in \F_{q} \setminus \F_{\sqrt{q}}$ where $x$ is not a cube in $\F_{q}$ and $x^{\sqrt{q}+1} \ne 1$. Set 
\begin{align*}
& & \cO_{-1} = S^{G_{-1}} = \{(a^3+xb^3, -3a^2c-3xb^2d, ac^2+xbd^2, c^3+xd^3) \mid a,b,c,d \in \F_{q}, ad-bc \ne 0, \\
& & ab^{\sqrt{q}} - cd^{\sqrt{q}} = 0, a^{\sqrt{q}+1} + b^{\sqrt{q}+1} - c^{\sqrt{q}+1} - d^{\sqrt{q}+1} = 0 \}. 
\end{align*}
\begin{lemma}
The set $\cO_{-1}$ is a partial ovoid of $\cW(3, q)$ of size $\sqrt{q}(q-1)/3$.
\end{lemma}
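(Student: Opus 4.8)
The plan is to mirror the proof of the preceding lemma, in two steps; the new difficulty is that the parameters $a,b,c,d$ of an element of $G_{-1}$ now range over $\F_q$ rather than over $\F_{\sqrt q}$, so one cannot simply raise the relevant equation to the $\sqrt q$-th power, and the two defining relations $ab^{\sqrt q}-cd^{\sqrt q}=0$ and $a^{\sqrt q+1}+b^{\sqrt q+1}-c^{\sqrt q+1}-d^{\sqrt q+1}=0$ of $G_{-1}$ (together with their $\sqrt q$-th powers) have to be used in an essential way.

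For the size, note that $G_{-1}\simeq\PGL(2,\sqrt q)$ has order $\sqrt q(q-1)$, so by the Orbit--Stabilizer Theorem it suffices to check that the stabilizer of $S$ in $G_{-1}$ has order $3$. Requiring $M_{a,b,c,d}$ to fix $S$, the vanishing of the second and third coordinates of $M_{a,b,c,d}\cdot S$ together with $ad-bc\ne 0$ forces $M_{a,b,c,d}$ to be diagonal ($b=c=0$) or anti-diagonal ($a=d=0$). In the diagonal case the condition becomes $(d/a)^3=1$, and since $\sqrt q\equiv -1\pmod 3$ one has $3\mid\sqrt q+1$, so every cube root of unity automatically satisfies the norm relation $(d/a)^{\sqrt q+1}=1$; this case thus contributes exactly the three elements $M_{1,0,0,\zeta}$ with $\zeta^3=1$. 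In the anti-diagonal case one is led to $(c/b)^3=x^2$, which is impossible since $q\equiv 1\pmod 3$ and $x$ — hence $x^2$ — is a non-cube of $\F_q$. Therefore the stabilizer has order $3$ and $|\cO_{-1}|=\sqrt q(q-1)/3$.

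For the partial ovoid property, since $\cO_{-1}=S^{G_{-1}}$ is a $G_{-1}$-orbit and $G_{-1}$ stabilizes $\cW(3,q)$, it is enough to show that whenever $g=M_{a,b,c,d}\in G_{-1}$ satisfies $S^g\ne S$, the line $\langle S,S^g\rangle$ is not totally isotropic, i.e. $\beta(S,S^g)\ne 0$; exactly as before this amounts to $A(x)\ne 0$, where $A(x)=c^3+xd^3-xa^3-x^2b^3$. Suppose $A(x)=0$. If $b\ne 0$ we may rescale to $b=1$; then $ab^{\sqrt q}=cd^{\sqrt q}$ reads $a=cd^{\sqrt q}$, and substituting this into $a^{\sqrt q+1}+1=c^{\sqrt q+1}+d^{\sqrt q+1}$ (using that $N\colon z\mapsto z^{\sqrt q+1}$ is multiplicative and $\F_{\sqrt q}$-valued) gives $(N(c)-1)(N(d)-1)=0$. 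The possibility $N(d)=1$ forces $ad=bc$ and is excluded; hence $N(c)=1$, whence $d=ca^{\sqrt q}$ and $A(x)=0$ becomes $c^3\big(1+xa^{3\sqrt q}\big)=x(a^3+x)$. Taking the norm of this identity and cancelling (using $N(c)=1$) reduces everything to $a^3x^{\sqrt q}+a^{3\sqrt q}x+x^{\sqrt q+1}+1=0$; but this is precisely the statement that $1+xa^{3\sqrt q}=-x^{\sqrt q}(a^3+x)$, so the displayed identity yields $c^3=-x/x^{\sqrt q}=-x^{1-\sqrt q}$ (one checks that $a^3+x\ne 0$ and $1+xa^{3\sqrt q}\ne 0$, each time because $x$ is a non-cube and $x^{\sqrt q+1}\ne 1$). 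Since $q\equiv 1\pmod 3$, $\sqrt q\equiv -1\pmod 3$ and $x$ is a non-cube of $\F_q$, the element $-x^{1-\sqrt q}$ is again a non-cube of $\F_q$, contradicting $c\in\F_q^{\ast}$. Thus $b=0$; then $cd^{\sqrt q}=0$ with $ad-bc\ne 0$ forces $c=0$, and $A(x)=0$ collapses to $d^3=a^3$, so that $S^g=S$, against our assumption. Hence $\cO_{-1}$ is a partial ovoid of $\cW(3,q)$.

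I expect the crux to be the norm manipulation in the case $N(c)=1$: recognising that the norm of $c^3(1+xa^{3\sqrt q})=x(a^3+x)$ collapses to a relation equivalent to $1+xa^{3\sqrt q}=-x^{\sqrt q}(a^3+x)$, which in turn pins down $c^3=-x^{1-\sqrt q}$; after that the contradiction is the same cube/non-cube argument used for the size. The degenerate sub-cases $a^3=-x$ and $1+xa^{3\sqrt q}=0$ still need to be handled, but each is ruled out at once by the hypotheses $x^{\sqrt q+1}\ne 1$ and that $x$ is not a cube in $\F_q$.
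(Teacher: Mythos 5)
Your proof is correct and follows essentially the same route as the paper: orbit--stabilizer for the size, reduction to $A(x)=c^3+xd^3-xa^3-x^2b^3\ne 0$, the case split on $b$, use of the two defining relations of $G_{-1}$ to force $c^{\sqrt q+1}=1$, and a final cube/non-cube contradiction. The only (harmless) differences are that you compute the stabilizer of $S$ explicitly where the paper merely asserts its order, and you replace the paper's appeal to the solvability criterion for $y^{\sqrt q}-\alpha y=\beta$ by the equivalent direct norm computation.
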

\begin{proof}
Since the stabilizer of $S$ in $G_{-1}$ has order $3$ and $|G_{-1}| = \sqrt{q}(q-1)$, by applying the Orbit-Stabilizer Theorem it follows that $\cO_{-1}$ has the required size. In order to show that $\cO_{-1}$ is a partial ovoid of $\cW(3, q)$, it is enough to see that the line joining $R$ and a further point $S^g$ of $\cO_{-1}$ is not a generator of $\cW(3, q)$. Here $g \in G_{-1}$ is induced by $M_{a,b,c,d}$. Assume by contradiction that this is not the case, then there are $a,b,c,d \in \F_{q}$, with $ad-bc \ne 0$, $ab^{\sqrt{q}} - cd^{\sqrt{q}} = 0$, $a^{\sqrt{q}+1} + b^{\sqrt{q}+1} - c^{\sqrt{q}+1} - d^{\sqrt{q}+1} = 0$, such that $A(x) = 0$, where
\begin{align*}
& A(x) = c^3 + x d^3 -x a^3 -x^2 b^3.
\end{align*}
If $b = 0$, then $c = 0$, since $cd^{\sqrt{q}} = 0$ and $ad \ne 0$. Moreover $a^{\sqrt{q}+1} = d^{\sqrt{q}+1}$ and $a^3 = d^3$, since $A(x) = 0$. Thus $g$ fixes $S$, i.e., $S = S^g$. If $b \ne 0$, we may assume w. l. o. g. that $b = 1$. Then $a = cd^{\sqrt{q}}$, $c(d^{\sqrt{q}+1}-1) \ne 0$ and $(1-c^{\sqrt{q}+1})(1-d^{\sqrt{q}+1}) = 0$. Therefore $c^{\sqrt{q}+1} = 1$ and $d^{\sqrt{q}+1} \ne 1$. Moreover $A(x) = 0$ gives
\begin{align*}
& d^{3\sqrt{q}} - \frac{d^3}{c^3} - \frac{c^3 -x^2}{c^3 x} = 0. 
\end{align*}
By considering the last equation in the unknown $d^3$, by \cite[Theorem 1.9.3]{H1}, it admits solutions in $\F_{q}$ if and only 
\begin{align*}
& 0  = \frac{c^{3\sqrt{q}} - x^{2\sqrt{q}}}{c^{3\sqrt{q}} x^{\sqrt{q}}} + \frac{c^3 -x^2}{c^3 x} \frac{1}{c^{3\sqrt{q}}} = \frac{x (1-x^{\sqrt{q}+1}) (c^{3\sqrt{q}} + x^{\sqrt{q}-1})}{c^{3\sqrt{q}} x^{\sqrt{q}+1}}. 
\end{align*} 
Hence either $x = 0$ or $x^{\sqrt{q}+1} = 1$ or $x^{\sqrt{q}-1}$ is a cube in $\F_{q}$. If the last possibility occurs then $x$ is a cube in $\F_{q}$ since $\sqrt{q} \equiv -1 \mod{3}$. We infer that none of the three cases arises. 
\end{proof}

\begin{theorem}\label{partial-symp3}
Let $q$ be an odd square with $\sqrt{q} \equiv \epsilon \pmod{3}$, $\epsilon \in \{\pm1\}$. Then the set $\cO_{\epsilon} \cup \cC$ is a partial ovoid of $\cW(3, q)$, of size $(q^{3/2} +3q - q^{1/2} +3)/3$.
\end{theorem}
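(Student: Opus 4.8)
The plan is to combine the two previous lemmas with one additional verification: that no generator of $\cW(3,q)$ meets both $\cO_\epsilon$ and $\cC$, and that $\cC$ itself is a partial ovoid. First I would recall that $\cC$ is a partial ovoid of $\cW(3,q)$: a generator of $\cW(3,q)$ is a totally isotropic line, i.e.\ a line $\ell$ with $\ell \subseteq \ell^\s$; since $\cC$ is a $(q+1)$-arc, any line meets $\cC$ in at most two points $P_s, P_t$, and the chord $\langle P_s, P_t\rangle$ is a generator only if it equals its own polar, whereas the axis $\langle P_s, P_t\rangle^\s$ is a distinct line (for real chords this is the standard fact that chords and axes are disjoint families, and a tangent line $\ell_P$ lies in the osculating plane $P^\s$ but is not totally isotropic because $P^\s \ne \ell_P^\s$). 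So no generator contains two points of $\cC$. Together with the two lemmas, which already show $\cO_\epsilon$ is a partial ovoid, the size count gives $|\cC| + |\cO_\epsilon| = (q+1) + \sqrt q(q-1)/3 = (3q+3+q^{3/2}-q^{1/2})/3$, matching the claim; so the only remaining point is the cross-incidences.

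The heart of the argument is therefore: show that for every $P_s \in \cC$ (including $U_4$) and every point $R^g \in \cO_\epsilon$ (with $R = R_\epsilon := U_1 + xU_4$ the base point of the orbit), the line $\langle P_s, R^g\rangle$ is not a generator of $\cW(3,q)$. Using the transitivity of $G_\epsilon$ on $\cO_\epsilon$ and the fact that $G_\epsilon \le G$ stabilises $\cC$, it suffices to check this for the single point $R$ and all $P_s \in \cC$: if $\langle P_s, R^g\rangle$ were totally isotropic, applying $g^{-1}$ would make $\langle P_{s'}, R\rangle$ totally isotropic for the image $P_{s'} = P_s^{g^{-1}} \in \cC$. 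So I would compute, for $R = U_1 + xU_4 = (1,0,0,x)$ and $P_t = (1,-3t,t^2,t^3)$, the condition $\beta(R, P_t) = 0$, which by the given form is $x_1 y_4 + x_2 y_3 - x_3 y_2 - x_4 y_1 = t^3 - x = 0$; and for $P_{U_4} = (0,0,0,1)$, the condition is $-1 = 0$, impossible. Hence $\langle R, P_t\rangle$ is totally isotropic only if $t^3 = x$. But $x$ was chosen to be a \emph{non-cube} in $\F_q$ (in both cases $\epsilon = 1$ and $\epsilon = -1$), so $t^3 = x$ has no solution $t \in \F_q$, and the line $\langle R, P_t\rangle$ is never a generator. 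This is exactly where the hypothesis ``$x$ not a cube in $\F_q$'', already imposed in the construction of $\cO_\epsilon$, gets used a second time.

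The main obstacle — such as it is — is bookkeeping rather than depth: one must be careful that the base point $R$ (resp.\ $S$) used in the two cases is the \emph{same} point fed into this cross-incidence check, and that $R \notin \cC$ (clear, since $R = (1,0,0,x)$ with $x \ne 0$ would force $-3t = 0 = t^2$, impossible, and $R \ne U_4$), so that $\cO_\epsilon$ and $\cC$ are genuinely disjoint and the sizes add without overlap. One should also note that the two lemmas handle the line through two points of $\cO_\epsilon$ and that $\cC$ being an arc handles two points of $\cC$, so the three cases (two in $\cC$; two in $\cO_\epsilon$; one in each) are exhaustive for a would-be generator carrying two points of $\cO_\epsilon \cup \cC$, and all three have been excluded. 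Assembling these, $\cO_\epsilon \cup \cC$ meets every generator of $\cW(3,q)$ in at most one point, and its cardinality is $(q^{3/2} + 3q - q^{1/2} + 3)/3$, which completes the proof.
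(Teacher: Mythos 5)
Your proof is correct and follows essentially the same route as the paper, whose entire argument is the observation that since $x$ is not a cube in $\F_q$ no generator through $R$ (or $S$) meets $\cC$: your computation $\beta(R,P_t)=t^3-x$ combined with the $G_\epsilon$-transitivity reduction is exactly that argument, and your size count, the disjointness of $\cO_\epsilon$ and $\cC$, and the fact that no chord of $\cC$ is totally isotropic (directly, $\beta(P_s,P_t)=(t-s)^3\ne 0$) are all in order. One correction to a harmless aside: the tangent lines to $\cC$ \emph{are} totally isotropic --- the paper explicitly states they are generators of $\cW(3,q)$, and indeed $\beta(P_t,P_t')=3t^2-6t^2+3t^2=0$ --- so your parenthetical claim that $\ell_P$ is not a generator is false, but this does not affect the proof, since a tangent line contains only one point of $\cC$ and your cross-incidence check already excludes a point of $\cO_\epsilon$ from lying on it.
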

\begin{proof}
Since in both cases $x$ is not a cube in $\F_q$, it follows that there is no line of $\cW(3, q)$ through $R$ or $S$ meeting $\cC$. 
\end{proof}

\begin{remark}\label{non-complete}
The partial ovoid $\cO_{\epsilon} \cup \cC$ of $\cW(3, q)$ is not maximal. Indeed if $N$ is a point of $\Lambda_{\epsilon}$ not lying on an osculating plane of $\cC_{\epsilon}$, then $\cO_{\epsilon} \cup \cC \cup \{N\}$ is a partial ovoid of $\cW(3, q)$. In order to see this fact let $\ell$ be a line of $\cW(3, q)$. If $\ell$ passes through a point of  $\cC_{\epsilon}$ then it lies in an osculating plane of $\cC_{\epsilon}$, whereas if $\ell$ contains a point of $\cC \setminus \Lambda_{\epsilon}$, then $\ell \cap \Lambda_{\epsilon}$ is a point lying on an imaginary axis of $\cC_{\epsilon}$. Similarly, since the points of $\cO_{\epsilon}$ are on extended chords of $\cC_{\epsilon}$, if $\ell$ contains a point of $\cO_{\epsilon}$, then $\ell \cap \Lambda_{\epsilon}$ is a point lying on an axis of $\cC_{\epsilon}$. On the other hand every axis of $\cC_{\epsilon}$ is contained in an osculating plane of $\cC_{\epsilon}$. 
\end{remark}

\begin{prob}
Find a maximal partial ovoid of $\cW(3, q)$ containing $\cO_{\epsilon} \cup \cC$.
\end{prob}

\subsection{Maximal partial ovoids of ${\mathcal W}(5, q)$ of size $q^2+q+1$}

Here we consider a symplectic polar space in $\PG(5, q)$, where $\PG(5, q)$ is embedded as a subgeometry in $\PG(5, q^3)$. We denote by $N: x \in \F_{q^3} \mapsto x^{q^2+q+1} \in \F_q$ the {\em norm function} of $\F_{q^3}$ over $\F_q$ and by $T: x \mapsto x+x^q+x^{q^2}$ the {\em trace function} of $\F_{q^3}$ over $\F_q$. Let $V$ be the $6$-dimensional $\F_q$-vector subspace of $\F_{q^3}^6$ given by 
\begin{align*}
&\{P_{a,b} = (a,a^q, a^{q^2}, b^{q^2}, b^q, b) \mid a, b \in \F_{q^3}\}.
\end{align*}
Then $\PG(V)$ is a $q$-order subgeometry of $\PG(5, q^3)$. If $\cW(5, q^3)$ is the symplectic polar space consisting of the subspaces of $\PG(5, q^3)$ induced by the totally isotropic subspaces of $\F_{q^3}^6$ with respect to the non-degenerate alternating form given by 
\begin{align*}
x_1 y_6 + x_2 y_5 + x_3 y_4 - x_4 y_3 - x_5 y_2 - x_6 y_1, 
\end{align*}
then $\cW(5, q^3)$ induces in $\PG(V)$ a symplectic polar space, say $\cW(5, q)$. It is straightforward to check that the cyclic group of order $q^2+q+1$ formed by the projectivities of $\PG(V)$ induced by the matrices
\begin{align*}
& D_x = \begin{pmatrix}
x & 0 & 0 & 0 & 0 & 0 \\
0 & x^q & 0 & 0 & 0 & 0 \\
0 & 0 & x^{q^2} & 0 & 0 & 0\\
0 & 0 & 0 & x^{-q^2} & 0 & 0 \\
0 & 0 & 0 & 0 & x^{-q} & 0 \\
0 & 0 & 0 & 0 & 0 & x^{-1} \\
\end{pmatrix}, x \in \F_{q^3}, N(x) = 1,
\end{align*}
preserves $\cW(5, q)$. The group $K$ fixes the two planes
\begin{align*}
& \pi_1 = \{P_{a, 0} \mid a \in \F_{q^3} \setminus \{0\}\}, 
& \pi_2 = \{P_{0, b} \mid b \in \F_{q^3} \setminus \{0\}\}.
\end{align*}

\begin{lemma}
There are one or three $K$-orbits on points of $\pi_i$, $i = 1,2$, according as $q \not\equiv 1 \pmod{3}$ or $q \equiv 1 \pmod{3}$. In the latter case their representatives are as follows 
\begin{align*}
& \pi_1: P_{z^i,0}, 
\pi_2: P_{0, z^i}, i = 1,2,3, 
\end{align*}
for some $z \in \F_{q^3}$ such that $z^{q-1} = \xi$, where $\xi$ is a fixed element in $\F_q$ such that $\xi^2+\xi+1 = 0$. 

The group $K$ has  $q^3-1$ orbits on points of $\PG(V) \setminus (\pi_1 \cup \pi_2)$. Each of them has size $q^2+q+1$ and their representatives are the points 
\begin{align*}
 P_{1, b}, \; b \in \F_{q^3} \setminus \{0\}, & \quad q \not\equiv 1 \pmod{3}, \\
 P_{z^i, b^3}, i = 1,2,3, \; b \in \F_{q^3} \setminus \{0\}, & \quad q \equiv 1 \pmod{3}.
\end{align*}
\end{lemma}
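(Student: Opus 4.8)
The whole statement is about the action of the cyclic group $K$ (which is $\{D_x : x\in H\}$, where $H=\{x\in\F_{q^3}^*:N(x)=1\}$ and $|H|=q^2+q+1$) on the points of $\PG(V)$, so the symplectic form plays no role. First I would note that a point $P_{a,0}$ of $\pi_1$ depends only on the coset $a\F_q^*$ (two triples $(a,a^q,a^{q^2})$, $(a',a'^q,a'^{q^2})$ are proportional over $\F_{q^3}$ exactly when $a'/a\in\F_q^*$), and that $D_x$ sends $P_{a,0}$ to $P_{xa,0}$. Hence the $K$-orbits on $\pi_1$ are the cosets of the subgroup $\F_q^*H$ of $\F_{q^3}^*$, and there are $[\F_{q^3}^*:\F_q^*H]=\gcd(q-1,q^2+q+1)=\gcd(3,q-1)$ of them, namely $1$ if $q\not\equiv1\pmod3$ and $3$ if $q\equiv1\pmod3$; the same applies to $\pi_2$, on which $D_x$ acts by $P_{0,b}\mapsto P_{0,x^{-1}b}$ and $\{x^{-1}:x\in H\}=H$. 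For $q\equiv1\pmod3$ the group $\F_q^*H$ equals the subgroup $(\F_{q^3}^*)^3$ of cubes; since $z^{(q^3-1)/3}=(z^{q-1})^{(q^2+q+1)/3}=\xi^{(q^2+q+1)/3}=\xi\neq1$ (because $(q^2+q+1)/3\equiv1\pmod3$ in this case), $z$ and $z^2$ lie in the two non-trivial cosets, while $z^3\in\F_q^*\subseteq(\F_{q^3}^*)^3$ gives $P_{z^3,0}=P_{1,0}$; hence $P_{z^i,0}$, $i=1,2,3$, represent the three orbits on $\pi_1$, and $P_{0,z^i}$ those on $\pi_2$.

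Next I would show that $K$ acts semiregularly on $\PG(V)\setminus(\pi_1\cup\pi_2)$: if $x\in H$ and $D_xP_{a,b}=\lambda P_{a,b}$ for some scalar $\lambda$, with $a,b\neq0$, then comparing the first coordinate forces $\lambda=x$ and comparing the last forces $x^{-1}=x$, so $x^2=1$; since $q^2+q+1$ is odd, $x=1$. Therefore every $K$-orbit off the two planes has size $|K|=q^2+q+1$. As $\PG(V)$ has $(q^6-1)/(q-1)$ points and $\pi_1,\pi_2$ are disjoint planes with $q^2+q+1$ points each, $|\PG(V)\setminus(\pi_1\cup\pi_2)|=(q^6-1)/(q-1)-2(q^2+q+1)=(q^3-1)(q^2+q+1)$, and dividing by the common orbit size gives exactly $q^3-1$ such orbits.

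It remains to identify representatives off $\pi_1\cup\pi_2$. A point $P_{a,b}$ with $a,b\neq0$ is represented by $(\mu a,\mu b)$ for any $\mu\in\F_q^*$, and $D_x$ sends it to $P_{xa,x^{-1}b}$, so its $K$-orbit is $\{P_{\mu xa,\mu x^{-1}b}:\mu\in\F_q^*,\ x\in H\}$. If $q\not\equiv1\pmod3$ then $\F_q^*H=\F_{q^3}^*$, so $a$ can be normalised to $1$; moreover if $P_{1,b}$ and $P_{1,b'}$ are $K$-equivalent, the first three coordinates force $x^q=x^{q^2}=x$, i.e. $x\in\F_q^*\cap H=\{1\}$, whence $b'=b$. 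Thus $\{P_{1,b}:b\in\F_{q^3}^*\}$ meets each of the $q^3-1$ orbits exactly once. If $q\equiv1\pmod3$, the class of $a$ in $\F_{q^3}^*/(\F_{q^3}^*)^3$ is a $K$-invariant (as $\mu\in\F_q^*\subseteq(\F_{q^3}^*)^3$ and $x\in H\subseteq(\F_{q^3}^*)^3$), so $a$ can be normalised to lie in $\{z,z^2,z^3\}$; having fixed $a=z^i$, the residual freedom consists of the pairs with $\mu x=1$, $\mu\in\F_q^*$, $x\in H$, that is $x\in\F_q^*\cap H=\langle\xi\rangle$, acting on the last coordinate by $b\mapsto x^{-2}b$. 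Since $x\mapsto x^{-2}$ permutes $\langle\xi\rangle$, the orbit of $P_{z^i,b}$ is determined exactly by $i$ and by $b$ modulo the cube roots of unity, i.e. by the pair $(i,b^3)$; letting $i$ run over $\{1,2,3\}$ and $b^3$ over $(\F_{q^3}^*)^3$ then gives the $3\cdot(q^3-1)/3=q^3-1$ orbits, with the representatives $P_{z^i,b^3}$ as claimed.

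The computations are routine arithmetic in the cyclic group $\F_{q^3}^*$, and the only step that needs care is the last one: after normalising the $a$-coordinate one must identify the residual stabiliser inside $\F_q^*\times H$ and observe that, for $q\equiv1\pmod3$, it acts on the remaining coordinate through the order-$3$ group $\langle\xi\rangle$ of cube roots of unity — this is precisely what forces the cube in ``$P_{z^i,b^3}$''. Together with the parity observation ($q^2+q+1$ odd) that makes the action semiregular off the planes, this yields both the orbit sizes and their number.
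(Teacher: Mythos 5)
Your argument follows the same route as the paper's, only phrased more algebraically: where the paper detects the fixed points of $D_x$ on $\pi_1,\pi_2$ to identify the kernel $\langle D_\xi\rangle$ and then uses the unique transversal line through $\pi_1$ and $\pi_2$ to handle the off-plane orbits, you work directly with the cosets of $\F_q^*H$ in $\F_{q^3}^*$ (with $H$ the norm-one subgroup) and with the residual stabiliser $\F_q^*\cap H=\langle\xi\rangle$. All of this is correct, and everything up to and including the assertion that the orbit of $P_{z^i,b}$ is determined exactly by the pair $(i,b^3)$ is sound; the semiregularity and counting arguments are fine.

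The final clause, however, is a non sequitur — and it faithfully reproduces a slip that is already in the lemma's statement and is glossed over by the paper's ``the result now follows''. From ``the orbit of $P_{z^i,b}$ is labelled by $(i,b^3)$'' the correct conclusion is that a transversal is given by the points $P_{z^i,b}$ with $b$ running over a transversal of $\F_{q^3}^*/\langle\xi\rangle$, \emph{not} by the points $P_{z^i,b^3}$: the point $P_{z^i,b^3}$ lies in the orbit labelled $(i,b^9)$. Concretely, once the first coordinate is normalised to $z^i$, the second coordinates of an orbit form a single coset $d\langle\xi\rangle$; since $q\equiv 1\pmod 3$ forces $9\mid q^3-1$, one has $\langle\xi\rangle\subseteq(\F_{q^3}^*)^3$, so such a coset contains a cube if and only if $d$ itself is a cube. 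Hence the set $\{P_{z^i,b^3}\mid b\in\F_{q^3}^*\}$ meets only the $(q^3-1)/9$ orbits per value of $i$ whose label is a ninth power (each of them three times) and misses the remaining two thirds of the orbits. So the literal statement you were asked to prove is false as written; what your argument actually (and correctly) establishes is the intended statement, namely that the off-plane orbits are represented by the points $P_{z^i,b}$, with $P_{z^i,b}$ and $P_{z^j,b'}$ in the same orbit precisely when $i=j$ and $b^3=b'^3$. This discrepancy is harmless for the rest of the paper, since the maximality proof there works with $P_{u,b}$ for arbitrary $b$, but it is worth flagging rather than repeating.
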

\begin{proof}
The element of $K$ induced by $D_x$ fixes a point of $\pi_1$ (or of $\pi_2$) if and only if $x^{q-1} = x^{q^2+q+1} = 1$, that is $x^3 = 1$, with $x \in \F_q$. Therefore if $q \not\equiv 1 \pmod{3}$, then $K$ permutes in a single orbit the points of $\pi_1$ (or of $\pi_2$). If $q \equiv 1 \pmod{3}$, then the kernel of the action of $K$ on both $\pi_1$ and $\pi_2$ consists of the subgroup of order three induced by $\langle D_\xi \rangle$. Let $z \in \F_{q^3}$ be such that $z^{q-1} = \xi$,  where $\xi$ is a fixed element in $\F_q$ such that $\xi^2+\xi+1 = 0$. In order to see that the representatives of the $K$-orbits on points of $\pi_1$ (or of $\pi_2$) are $P_{z^i, 0}$ (or $P_{0, z^i}$), $i = 1,2,3$, observe that 
\begin{align*}
& \F_{q^3} \setminus \{0\}= \bigcup_{i = 1}^3 \{x z^i \mid x \in \F_{q^3}, x^{q^2+q+1} = 1\}.
\end{align*}

Let $R$ be a point of $\PG(V) \setminus (\pi_1 \cup \pi_2)$. It is straightforward to check that no non-trivial element of $K$ leaves $R$ invariant, that is $|R^K| = q^2+q+1$. Since the planes $\pi_1$, $\pi_2$ are disjoint, there exists a unique line $\ell$ of $\PG(V)$ intersecting both $\pi_1$ and $\pi_2$ and containing $R$. Hence if $|\ell \cap R^K| \ge 2$, then there exists a non-trivial element in $K$ fixing $\ell$ and hence stabilizing both $\ell \cap \pi_1$ and $\ell \cap \pi_2$. From the discussion above such a non-trivial element exists if and only if $q \equiv 1 \pmod{3}$. If $q \not\equiv 1 \pmod{3}$, it follows that the representatives of the $K$-orbits on points of $\PG(V) \setminus (\pi_1 \cup \pi_2)$ can be taken as the points on the $q^2+q+1$ lines through $P_{1,0}$ and meeting $\pi_2$ in a point. Similarly, if $q \equiv 1 \pmod{3}$, then these representatives can be chosen among the points of $\PG(V) \setminus (\pi_1 \cup \pi_2)$ lying on the $3(q^2+q+1)$ lines through $P_{z^i,0}$, $i = 1,2,3$, and meeting $\pi_2$ in a point. In particular, in this case, $P_{z^i, b}$ and $P_{z^i, b'}$ lie in the same $K$-orbit if $b' = \xi^i b$, $i = 1,2,3$. The result now follows.  
\end{proof}

\begin{theorem}\label{partial-symp5}
If $c \in \F_q \setminus \{0\}$, the orbit $\cO = P_{1, c}^K$ is a partial ovoid of $\cW(5, q)$ of size $q^2+q+1$.
\end{theorem}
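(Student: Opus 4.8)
The plan is to convert the partial-ovoid property into a single elementary statement about norm-one elements of $\F_{q^3}$. The size assertion is free: because $c \ne 0$, the point $P_{1,c}$ lies in $\PG(V) \setminus (\pi_1 \cup \pi_2)$, so by the previous Lemma its $K$-orbit has length exactly $q^2+q+1$. For the partial-ovoid property, note first that the matrices $D_x$ satisfy $D_x^{t} J' D_x = J'$, where $J'$ is the Gram matrix of the defining alternating form, because the diagonal entries of $D_x$ pair off as $x \cdot x^{-1}$, $x^{q}\cdot x^{-q}$, $x^{q^2}\cdot x^{-q^2}$; hence $K$ preserves the form, not merely up to a scalar. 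Since $\cO$ is a single $K$-orbit, it therefore suffices to prove that $P_{1,c}$ and $P_{1,c}^{D_x}$ are not conjugate under the form for every $x \in \F_{q^3}$ with $N(x) = 1$ and $x \ne 1$ (two distinct points of $\cO$ lie in a common generator precisely when the line joining them is totally isotropic, i.e.\ when they are conjugate).

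Next I would carry out the explicit computation. Using $c \in \F_q$ one has $P_{1,c} = (1,1,1,c,c,c)$ and $P_{1,c}^{D_x} = (x, x^{q}, x^{q^2}, c x^{-q^2}, c x^{-q}, c x^{-1}) = P_{x,\,cx^{-1}}$, and substituting into $x_1 y_6 + x_2 y_5 + x_3 y_4 - x_4 y_3 - x_5 y_2 - x_6 y_1$ gives
\[
\langle P_{1,c},\, P_{1,c}^{D_x}\rangle \;=\; c\bigl(T(x^{-1}) - T(x)\bigr).
\]
Because $N(x) = 1$ forces $x^{-1} = x^{q} x^{q^2}$, the value $T(x^{-1})$ equals the second elementary symmetric function $e_2(x) = x x^{q} + x x^{q^2} + x^{q} x^{q^2}$, while $T(x) = e_1(x) = x + x^{q} + x^{q^2}$. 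So everything reduces to showing that $e_1(x) \ne e_2(x)$ whenever $N(x) = 1$ and $x \ne 1$.

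To finish, observe that $x$ is a root of $t^3 - e_1(x) t^2 + e_2(x) t - 1$; if $e_1(x) = e_2(x) = s$, this polynomial factors as $(t-1)\bigl(t^2 + (1-s)t + 1\bigr)$. Since $x \ne 1$, it would have to be a root of the quadratic factor, so $x \in \F_{q^2} \cap \F_{q^3} = \F_q$, whence $x^3 = N(x) = 1$; but for $x \in \F_q$ one has $e_1(x) = 3x$ and $e_2(x) = 3x^2$, so $e_1(x) = e_2(x)$ with $x \ne 0$ forces $3(1-x) = 0$, i.e.\ $x = 1$ (and if instead $\mathrm{char}\,\F_q = 3$, then $x^3 = 1$ already forces $x = 1$). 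This contradiction shows $\langle P_{1,c}, P_{1,c}^{D_x}\rangle \ne 0$ for all $x \ne 1$ with $N(x)=1$, hence no generator of $\cW(5,q)$ meets $\cO$ in two points.

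I do not expect a serious obstacle here: the only delicate points are the reduction in the first paragraph (which uses that $K$ is transitive on $\cO$ and genuinely preserves the form) and the degenerate case $\mathrm{char}\,\F_q = 3$ in the last step, where the generic argument collapses and must be treated separately.
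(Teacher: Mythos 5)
Your proof is correct and follows essentially the same route as the paper: reduce via transitivity of $K$ to lines through $P_{1,c}$, compute the form value $c\bigl(T(x^{-1})-T(x)\bigr)$, and show it is nonzero for $x\ne 1$ with $N(x)=1$. The only difference is the last step, where the paper uses the identity $c\,T(x^{-1}-x)=c\,T(x^{q+1}-x)=c\,N(1-x)$, which vanishes only for $x=1$ and so avoids your factorization of the characteristic polynomial and the separate characteristic-$3$ case; both finishes are valid.
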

\begin{proof}
Let $P'$ be a point of $P_{1, c}^K \setminus \{P_{1, c}\}$. Then there exists $x \in \F_{q^3}$, with $N(x) = 1$, $x \ne 1$, such that 
\begin{align*}
& P' = (x, x^q, x^{q^2}, c x^{-q^2}, c x^{-q}, c x^{-1}).
\end{align*}
If the line joining $P_{1, c}$ with $P'$ is a line of $\cW(5, q)$, then
\begin{align*}
0 & = c T(x^{-1} - x)  = c T(x^{q+1} - x) = c N(1-x) 
\end{align*}
Therefore $x = 1$ and $P_{1, c} = P'$, a contradiction.
\end{proof}

In order to prove that $\cO$ is maximal we need to recall some results obtained by Culbert and Ebert in \cite{CE} in terms of Sherk surfaces. A {\em Sherk surface} $S(\alpha, \beta, \gamma, \delta)$, where $\alpha, \delta \in \F_{q}$, $\beta, \gamma \in \F_{q^3}$, can be seen as a hypersurface of the projective line $\PG(1, q^3)$. More precisely
\begin{align*}
S(\alpha, \beta, \gamma, \delta) = \{x \in \F_{q^3} \cup \{\infty\} \mid \alpha N(x) + T(\beta^{q^2} x^{q+1}) + T(\gamma x) + \delta = 0\}.
\end{align*}

\begin{lemma}[\cite{CE}]\label{ebert}
Let $\alpha, \delta \in \F_{q}$, $\beta, \gamma \in \F_{q^3}$.
\begin{itemize}
\item[1)] $|S(\alpha, \beta, \gamma, \delta)| \in \{1, q^2-q+1, q^2+1, q^2+q+1\}$.
\item[2)] $\infty \in S(\alpha, \beta, \gamma, \delta)$ if and only if $\alpha = 0$.
\item[3)] $|S(\alpha, \beta, \gamma, \delta)| = 1$ if and only if $(\alpha, \beta, \gamma, \delta) \in \{(1, \beta, \beta^{q^2+q}, N(\beta)), (0,0,0,1)\}$.
\end{itemize}
\end{lemma}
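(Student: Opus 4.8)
The plan is to derive all three items from a reduction of $S(\alpha,\beta,\gamma,\delta)$ to a short list of normal forms under a large group action. Writing $u=x$, $v=x^q$, $w=x^{q^2}$, the defining function $\alpha N(x)+T(\beta^{q^2}x^{q+1})+T(\gamma x)+\delta$ becomes the trilinear expression
\[
\Phi(u,v,w)=\alpha uvw+\beta^{q^2}uv+\beta vw+\beta^q uw+\gamma u+\gamma^q v+\gamma^{q^2}w+\delta ,
\]
and one checks that, as $(\alpha,\beta,\gamma,\delta)$ ranges over $\F_q\times\F_{q^3}\times\F_{q^3}\times\F_q$, these $\Phi$ run exactly through the $\F_q$-rational $2\times 2\times 2$ tensors on $\PG(1,q^3)$ (those fixed by the order-three semilinear map that cyclically permutes the three factors and applies Frobenius); then $S(\alpha,\beta,\gamma,\delta)=\{x\in\F_{q^3}:\Phi(x,x^q,x^{q^2})=0\}$, together with $\infty$ when $\alpha=0$ — which is item 2), since after homogenising the point $\infty=[0:1]$ picks out only the monomial $\alpha uvw$. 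Moreover, for $g\in\PGL(2,q^3)$ the linear map $g\otimes g^{(q)}\otimes g^{(q^2)}$ (Frobenius-twisting the entries in the second and third slots) commutes with this semilinear structure, hence sends $\F_q$-rational tensors to $\F_q$-rational tensors while reparametrising the twisted diagonal $x\mapsto(x,x^q,x^{q^2})$; so $\PGL(2,q^3)$ permutes the family $\{S(\alpha,\beta,\gamma,\delta)\}$ and $|S|$ is constant on its orbits. It therefore suffices to evaluate $|S|$ on orbit representatives, for which I would use only the translations $x\mapsto x+t$, scalings $x\mapsto\lambda x$ and the inversion $x\mapsto 1/x$ (which generate $\PGL(2,q^3)$).

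For the reduction: if $\alpha\neq 0$, normalise $\alpha=1$; expanding $N(x+\beta)=(x+\beta)(x^q+\beta^q)(x^{q^2}+\beta^{q^2})$ and comparing coefficients gives $N(x)+T(\beta^{q^2}x^{q+1})+T(\gamma x)+\delta=N(x+\beta)+T(\mu x)+\nu$ with $\mu=\gamma-\beta^{q^2+q}\in\F_{q^3}$ and $\nu=\delta-N(\beta)\in\F_q$, so after the translation $y=x+\beta$ the surface is $\{y\in\F_{q^3}:N(y)+T(\mu y)+\nu'=0\}$ with $\nu'=\nu-T(\mu\beta)$ and no point at infinity. If $\alpha=0$ and $\beta=0$, the function $T(\gamma x)+\delta$ is $\F_q$-affine-linear; if $\alpha=0$ and $\beta\neq 0$, the map $x\mapsto T(\beta^{q^2}x^{q+1})$ is a non-degenerate ternary $\F_q$-quadratic form (its bilinear radical consists of the $y$ with $\beta^{q^2}y^q+\beta^q y^{q^2}=0$, which forces $-1$ to be a norm-one element of $\F_{q^3}$, impossible for $q$ odd; the case $q$ even is handled separately), so completing the square reduces $S$ to $\{y:T(\beta^{q^2}y^{q+1})=c\}\cup\{\infty\}$.

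Now one counts each normal form. For $\alpha=0$, $\beta=0$: $|S|\in\{1,q^2+1\}$ (a coset of the surjection $T(\gamma\,\cdot):\F_{q^3}\to\F_q$ plus $\infty$), with $|S|=1$ iff $S=\{\infty\}$, i.e. $(\alpha,\beta,\gamma,\delta)=(0,0,0,1)$ up to scalar. For $\alpha=0$, $\beta\neq 0$: the classical formula $|\{Q=c\}|\in\{q^2,q^2\pm q\}$ for a non-degenerate ternary $\F_q$-quadric gives $|S|\in\{q^2+1,q^2\pm q+1\}$, never $1$. For $\alpha\neq 0$: partition $\{N(y)+T(\mu y)+\nu'=0\}$ into the $q$ affine $\F_q$-planes $H_\tau=\{T(\mu y)=\tau\}$ and on each count the $y$ with $N(y)=-\tau-\nu'$; when $\mu=0$ this gives $1$ (if $\nu'=0$) or one full norm fibre of size $q^2+q+1$, and when $\mu\neq 0$ the total comes out to be $q^2-q+1$ or $q^2+q+1$. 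Assembling the cases gives item 1). For item 3): in the $\alpha\neq 0$ case $|S|=1$ forces $\mu=0$ and $\nu'=0$, i.e. the Sherk function is $N(x+\beta)$, and unwinding $\mu=\gamma-\beta^{q^2+q}=0$, $\nu=\delta-N(\beta)=0$ yields $(\alpha,\beta,\gamma,\delta)=(1,\beta,\beta^{q^2+q},N(\beta))$; the $\alpha=0$ case was already seen to give $(0,0,0,1)$.

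The main obstacle — and the one genuinely computational step, carried out in \cite{CE} — is the count in the case $\alpha\neq 0$, $\mu\neq 0$: showing that $|\{y\in\F_{q^3}:N(y)+T(\mu y)+\nu'=0\}|$ is always exactly $q^2-q+1$ or $q^2+q+1$ (in particular that it never equals $q^2+1$ nor anything outside the list). Equivalently one needs the $\F_q$-point count of the cubic surface $N(y_1,y_2,y_3)+y_0^2\,T(\mu y)+\nu' y_0^3=0$ of $\PG(3,q)$, whose plane at infinity meets it only in the conjugate triangle $N=0$ (so that all its $\F_q$-points are affine); this surface is smooth away from the locus $\nu'^2=-4N(\mu)$, and its point count can be pinned to the two asserted values either through the structure of such diagonal-type cubic surfaces or through a Gauss-sum evaluation of the character sums $\sum_{y}\psi(T(\mu y))$ (for a nontrivial additive character $\psi$ of $\F_q$) over a norm fibre. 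Once this is in hand, everything else is the bookkeeping above.
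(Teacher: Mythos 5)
The paper offers no proof of this lemma --- it is quoted verbatim from Culbert--Ebert \cite{CE} --- so the only question is whether your from-scratch argument is complete and correct. Your algebraic reduction of the $\alpha\neq 0$ case is right: the identity $N(x+\beta)=N(x)+T(\beta^{q^2}x^{q+1})+T(\beta^{q^2+q}x)+N(\beta)$ does bring $S(1,\beta,\gamma,\delta)$ to the normal form $\{y\in\F_{q^3}: N(y)+T(\mu y)+\nu'=0\}$, the $\PGL(2,q^3)$-equivariance via the twisted tensor action is sound, and the $\alpha=0$ analysis is fine for $q$ odd. The gap sits exactly where the content of item 1) lies: you defer the count for $\alpha\neq0$, $\mu\neq0$ as ``the main obstacle'' without carrying it out, and the statement you propose to prove there --- that this count is always $q^2-q+1$ or $q^2+q+1$ --- is false. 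For $q=2$ the surface $S(1,0,1,0)=\{x\in\F_8 : N(x)+T(x)=0\}$ has $\mu=1\neq0$ and consists of $x=0$ together with the four elements of absolute trace $1$, hence has $q^2+1=5$ points. Your own framework forces this in general: Sherk surfaces of size $q^2+1$ exist (e.g.\ $S(0,0,\gamma,0)$ with $\gamma\neq 0$), $\PGL(2,q^3)$ carries any such surface to one avoiding $\infty$ (pick $g$ with $g^{-1}(\infty)\notin S$), i.e.\ to one with $\alpha\neq0$, and by your reduction the subcase $\mu=0$ only produces sizes $1$ and $q^2+q+1$; hence size $q^2+1$ must occur with $\alpha\neq0$ and $\mu\neq0$.

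Consequently the dichotomy your proposed ``cubic-surface / Gauss-sum'' computation is aimed at cannot be established, and the correct target --- that $|\{y: N(y)+T(\mu y)+\nu'=0\}|$ lies in $\{q^2-q+1,\ q^2+1,\ q^2+q+1\}$ for $\mu\neq 0$, and in particular is never $1$ (which is what item 3) needs) nor any value off the list (which is what item 1) needs) --- is precisely the hard part of the lemma and remains undone. The characteristic-$2$ analysis of the quadratic form $T(\beta^{q^2}x^{q+1})$ in the case $\alpha=0$, $\beta\neq0$ (where the associated bilinear form has a nontrivial radical) is likewise only gestured at. As it stands, the proposal is a correct reduction plus a promissory note for the counting step, and the promissory note is written for the wrong statement; the honest course here is what the paper does, namely to cite \cite{CE}.
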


Note that $\lambda S(\alpha_1, \beta_1, \gamma_1, \delta_1) + \mu S(\alpha_2, \beta_2, \gamma_2, \delta_2) = S(\lambda \alpha_1 + \mu \alpha_2, \lambda \beta_1 + \mu \beta_2, \lambda \gamma_1 + \mu \beta_2, \lambda \delta_1 + \mu \delta_2)$ for any $\lambda, \mu \in \F_q$, $(\lambda, \mu) \ne (0, 0)$. Therefore any two distinct Sherk surfaces give rise to a pencil containing $q+1$ distinct Sherk surfaces which together contain all the points of $\PG(1, q^3)$. Moreover, any two distinct Sherk surfaces in a given pencil intersect in the same set of points, called the {\em base locus} of the pencil, which is the intersection of all the Sherk surfaces in that pencil.

\begin{lemma}\label{sherk}
Let $\beta, \gamma \in \F_{q^3}$ not both zero. If one of the following is satisfied 
\begin{itemize}
\item $\beta \gamma = 0$,
\item $N(\beta) \ne c^3$, for all $c \in \F_q \setminus \{0\}$, 
\item $N(\beta) = c^3$, for some $c \in \F_q \setminus \{0\}$, and $\gamma \beta \ne -c^2$,
\end{itemize}
then the base locus of the pencil generated by $S(1,0,0,-1)$ and $S(0,\beta, \gamma,0)$ is not empty.
\end{lemma}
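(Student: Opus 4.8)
The plan is to count the points of $\PG(1,q^3)$ by means of the pencil $\Pi$ generated by $S(1,0,0,-1)$ and $S(0,\beta,\gamma,0)$. By Lemma~\ref{ebert}~(2) the surface $S(1,0,0,-1)$ does not contain $\infty$ whereas $S(0,\beta,\gamma,0)$ does, so the two generators are distinct and $\Pi$ is a genuine pencil of $q+1$ Sherk surfaces; after rescaling, its members are $S(1,0,0,-1)$ together with the $q$ surfaces $S(t,\beta,\gamma,-t)$, $t\in\F_q$. Its base locus is $\cB=S(1,0,0,-1)\cap S(0,\beta,\gamma,0)\subseteq\F_{q^3}$, and since any two distinct members of $\Pi$ meet exactly in $\cB$ while the members together cover $\PG(1,q^3)$, every point outside $\cB$ lies on exactly one member. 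Counting incidences point by point,
\[
\sum_{S\in\Pi}|S| \;=\; q^3+1+q\,|\cB|,
\]
so it suffices to prove $\sum_{S\in\Pi}|S|\ge q^3+q+1$.

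Now $S(1,0,0,-1)$ is the set of elements of $\F_{q^3}$ of norm $1$, hence $|S(1,0,0,-1)|=q^2+q+1$, and by Lemma~\ref{ebert}~(1) each surface $S(t,\beta,\gamma,-t)$ has size in $\{1,q^2-q+1,q^2+1,q^2+q+1\}$. If none of them has size $1$, then
\[
\sum_{S\in\Pi}|S| \;\ge\; (q^2+q+1)+q(q^2-q+1) \;=\; q^3+2q+1 \;>\; q^3+q+1,
\]
so that in fact $|\cB|\ge 2$ and we are done. Hence it remains only to show that, under each of the three hypotheses, no member of $\Pi$ has size $1$.

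To analyse the size-one members I would apply Lemma~\ref{ebert}~(3): $|S(t,\beta,\gamma,-t)|=1$ forces $(t,\beta,\gamma,-t)$ to be an $\F_q$-scalar multiple of some $(1,b,b^{q^2+q},N(b))$ (the alternative $(0,0,0,1)$ is impossible, as it would require $t=0$ together with $-t$ a nonzero scalar). Writing $(t,\beta,\gamma,-t)=s\,(1,b,b^{q^2+q},N(b))$ with $s\in\F_q^{*}$ yields $t=s$, $\beta=sb$, $\gamma=sb^{q^2+q}$, $N(b)=-1$, and eliminating $b=\beta/s$ one checks that $\Pi$ has a member of size $1$ if and only if $\beta\gamma\ne 0$, $\gamma\beta\in\F_q$ and $(\gamma\beta)^3=-N(\beta)^2$, the relevant parameter being $s=t=N(\beta)/(\gamma\beta)$. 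These conditions are in turn equivalent to $\gamma\beta=-c^2$ for some $c\in\F_q^{*}$ with $c^3=N(\beta)$: indeed $(\gamma\beta)^3=-N(\beta)^2$ with $\gamma\beta\in\F_q^{*}$ forces $N(\beta)$ to be a cube in $\F_q^{*}$ --- automatic when $3\nmid q-1$, and when $3\mid q-1$ one applies a cubic multiplicative character $\chi$ of $\F_q^{*}$ and uses that $\chi(-1)$, being a cube root of unity with $\chi(-1)^2=\chi(1)=1$, equals $1$ --- say $N(\beta)=c_0^3$, and then $\gamma\beta/(-c_0^2)$ is a cube root of unity, so $\gamma\beta=-c^2$ for a suitable cube root $c$ of $N(\beta)$. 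Thus $\Pi$ has a size-one member precisely when $N(\beta)=c^3$ for some $c\in\F_q^{*}$ and $\gamma\beta=-c^2$ for one such $c$; and each of the three hypotheses of the lemma (respectively $\beta\gamma=0$; $N(\beta)$ not a cube in $\F_q^{*}$; $N(\beta)$ a cube but $\gamma\beta\ne -c^2$ for every cube root $c$ of $N(\beta)$) directly rules this out, which completes the argument.

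The step I expect to be the main obstacle is the middle of the last paragraph: extracting from Lemma~\ref{ebert}~(3) the clean criterion ``$\gamma\beta=-c^2$ for some cube root $c$ of $N(\beta)$'' for the existence of a size-one member, and in particular handling the case $3\mid q-1$ correctly, where $N(\beta)$ has three cube roots and one must check that the forbidden values of $\gamma\beta$ are exactly the three elements $-c^2$ --- this is precisely what dictates the shape of the third hypothesis. The partition count, the evaluation $|S(1,0,0,-1)|=q^2+q+1$, and the numerical inequality $q^3+2q+1>q^3+q+1$ are all routine.
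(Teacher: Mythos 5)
Your proof is correct and follows essentially the same route as the paper: rule out size-one members of the pencil via Lemma~\ref{ebert}(3), then count incidences of the points of $\PG(1,q^3)$ with the $q+1$ members of the pencil. The only differences are organizational — the paper runs the count as a contradiction via a small linear system in the quantities $y_i$ rather than your direct inequality $\sum_{S}|S|\ge q^3+2q+1$, and it leaves the verification that no member has size one (which you carry out in detail, including the correct ``for every cube root $c$ of $N(\beta)$'' reading of the third hypothesis) as an unelaborated appeal to Lemma~\ref{ebert}.
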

\begin{proof}
The pencil consists of $S(0, \beta, \gamma, 0)$ and $S(1,0,0,-1) + \lambda S(0, \beta, \gamma, 0) = S(1, \lambda \beta, \lambda \gamma, -1)$, where $\lambda \in \F_q$. Let $y_i$ be the number of Sherk surfaces in the pencil having $i$ points. Hence $y_i$ are non-negative integer and $\t \ge 1$ since $|S(1,0,0,-1)| = q^2+q+1$. By Lemma \ref{ebert}, it follows that $|S(0, \beta, \gamma, 0)| \ne 1$ and $|S(1, \lambda \beta, \lambda \gamma, -1)| \ne 1$, for $\lambda \in \F_q$, i.e., $y_1 = 0$. Assume by contradiction that the base locus of the pencil is empty, then the following equations hold
\begin{align*}
& \y + \z + \t = q+1, \\
& (q^2-q+1) \y + (q^2+1) \z + (q^2+q+1) \t = q^3+1.
\end{align*} 
By substituting $\y = q+1 - \z - \t$ in the second equation, we have that 
\begin{align*}
& \t = - \frac{\z}{2}. 
\end{align*}
Therefore necessarily 
\begin{align*}
& \t = \z = 0, \y = q+1,
\end{align*} 
which is a contradiction. 
\end{proof}

\begin{theorem}
The partial ovoid $\cO$ of $\cW(5, q)$ is maximal.
\end{theorem}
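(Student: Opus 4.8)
The plan is to prove that every point $Q\in\PG(V)\setminus\cO$ lies on a totally isotropic line of $\cW(5,q)$ meeting $\cO$; since any totally isotropic line of $\cW(5,q)$ is contained in a generator, this shows $\cO\cup\{Q\}$ is not a partial ovoid, so $\cO$ is maximal. Write $f$ for the alternating form defining $\cW(5,q^3)$; a direct computation gives $f(P_{a,b},P_{a',b'})=T(ab')-T(a'b)$, which is $\F_q$-valued and $\F_q$-bilinear on $\PG(V)$, so the line of $\PG(V)$ through two of its points is totally isotropic precisely when $f$ vanishes on that pair. Recall from the proof of Theorem~\ref{partial-symp5} that $\cO=\{D_xP_{1,c}\mid N(x)=1\}=\{P_{x,cx^{-1}}\mid N(x)=1\}$. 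Thus the problem reduces to: given $Q=P_{a,b}\notin\cO$, find $x\in\F_{q^3}$ with $N(x)=1$ and $f(P_{a,b},P_{x,cx^{-1}})=0$.

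The key step is to identify the set of such $x$ with a base locus handled by Lemma~\ref{sherk}. For $N(x)=1$ one has $x^{-1}=x^{q+q^2}$, and applying the Frobenius $z\mapsto z^{q^2}$ inside the trace gives $T(ax^{-1})=T(a^{q^2}x^{q+1})$; since $c\in\F_q$ this yields
\[
f(P_{a,b},P_{x,cx^{-1}})=c\,T(ax^{-1})-T(bx)=T((ca)^{q^2}x^{q+1})+T(-bx)
\]
for every $x$ with $N(x)=1$. Hence among the $x$ with $N(x)=1$ the admissible ones are exactly those of $S(0,ca,-b,0)$, so the admissible set equals $S(1,0,0,-1)\cap S(0,ca,-b,0)$ — the base locus of the pencil generated by $S(1,0,0,-1)$ (whose point set is $\{x:N(x)=1\}$, with $\infty$ excluded since the first parameter is nonzero) and $S(0,ca,-b,0)$. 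Any $x$ in this base locus produces $P_{x,cx^{-1}}\in\cO$, necessarily distinct from $Q$ as $Q\notin\cO$, together with the totally isotropic line $\langle Q,P_{x,cx^{-1}}\rangle$. So it suffices to show this base locus is non-empty whenever $P_{a,b}\notin\cO$.

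Non-emptiness follows from Lemma~\ref{sherk} with Sherk parameters $(\beta,\gamma)=(ca,-b)$, once one checks some one of its three bullets holds for every $Q\notin\cO$. If $ab=0$, i.e.\ $Q\in\pi_1\cup\pi_2$, then $(ca,-b)\neq(0,0)$ while $(ca)(-b)=0$, so the first bullet applies. If $ab\neq0$, argue by contraposition: suppose none of the three bullets holds. Failure of the second gives $c'\in\F_q\setminus\{0\}$ with $N(ca)=c^3N(a)=c'^3$; failure of the third then forces, for that $c'$, the relation $-abc=-c'^2$. From $abc=c'^2\in\F_q$ and $c\neq0$ we get $ab/c=(c'/c)^2$, and together with $c^3N(a)=c'^3$ also $N(a)=(c'/c)^3$. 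Setting $\lambda:=c'/c\in\F_q\setminus\{0\}$ and $x:=a/\lambda$ gives $N(x)=N(a)/\lambda^3=1$, and the coordinate relations — which use only $\lambda,c\in\F_q$ and $\lambda^2=ab/c$ — yield $\lambda\,(x,x^q,x^{q^2},cx^{-q^2},cx^{-q},cx^{-1})=(a,a^q,a^{q^2},b^{q^2},b^q,b)$, i.e.\ $Q=P_{a,b}=D_xP_{1,c}\in\cO$, a contradiction. Hence for every $Q\notin\cO$ the base locus is non-empty, and $\cO$ is maximal.

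The manipulations with $N$ and $T$, and the membership criterion $P_{a,b}\in\cO\iff\exists\lambda\in\F_q\setminus\{0\}$ with $\lambda^2=ab/c$ and $\lambda^3=N(a)$, are routine; the one genuinely delicate point is arranging the norm–trace identities so that the admissible-parameter set is exactly the base locus of a pencil of the form controlled by Lemma~\ref{sherk}, and then confirming that the simultaneous failure of its three bullet conditions collapses precisely onto that membership criterion.
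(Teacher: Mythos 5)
Your proof is correct and follows essentially the same route as the paper: translate the condition that $\langle Q, P_{x,cx^{-1}}\rangle$ is totally isotropic into membership of $x$ in $S(0,ca,-b,0)$, and invoke Lemma~\ref{sherk} on the pencil generated with $S(1,0,0,-1)$. The only (harmless, and arguably cleaner) difference is that you treat an arbitrary point $Q=P_{a,b}$ uniformly and show by contraposition that simultaneous failure of all three bullet conditions forces $Q\in\cO$, whereas the paper runs through the $K$-orbit representatives case by case.
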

\begin{proof}
It is enough to show that for each of the representatives $R$ of the $K$-orbits on points of $\PG(V) \setminus \cO$, there exists a point $P = (x, x^q, x^{q^2}, c x^{-q^2}, c x^{-q}, c x^{-1}) \in \cO$ such that the line joining $R$ and $P$ is a line of $\cW(5, q)$. If $R$ is a point of $\pi_1$, then $R = P_{1,0}$ if $q \not\equiv 1 \pmod{3}$ or $R \in \{P_{z^i,0} \mid i = 1,2,3\}$, if $q \equiv 1 \pmod{3}$. The line $R P$ belongs to $\cW(5, q)$ if and only if there exists $x \in \F_{q^3}$, with $N(x) = 1$ such that 
\begin{align}
& c T(u x^{-1}) = 0, \label{conj1}
\end{align}  
where $u = 1$ or $u \in \{z^i \mid i = 1,2,3\}$ according as $q \not \equiv 1 \pmod{3}$ or $q \equiv 1 \pmod{3}$. Since $c \ne 0$ and $N(x) = 1$, \eqref{conj1} is equivalent to $T(u^{q^2} x^{q+1}) = 0$, i.e., $x \in S(0, u, 0, 0)$. Hence the existence of a line $R P$ of $\cW(5, q)$ is equivalent to the following  
\begin{align}
& |S(1,0,0,-1) \cap S(0, u, 0, 0)| \ge 1. \label{sherk1}
\end{align}  
As before let $u = 1$ or $u \in \{z^i \mid i = 1,2,3\}$ according as $q \not \equiv 1 \pmod{3}$ or $q \equiv 1 \pmod{3}$. Let $R$ be the point $P_{0, u} \in \pi_2$ or $P_{u, b} \in \PG(V) \setminus (\pi_1 \cup \pi_2 \cup \cO)$, where $b \in \F_{q^3} \setminus \{0\}$, with $b^3 \ne c^3$, if $u = 1$, otherwise $P_{1, b} \in \cO$. Arguing in a similar way we get that the existence of a line $R P$ of $\cW(5, q)$ is equivalent to
\begin{align}
& |S(1,0,0,-1) \cap S(0, 0, u, 0)| \ge 1 \mbox{ or }  |S(1,0,0,-1) \cap S(0, cu, -b, 0)| \ge 1.  \label{sherk2}
\end{align}   
Since $N(u)$ is a cube in $\F_q$ if and only if $u = 1$ and if $u = 1$, then $b \ne c$, the inequalities in \eqref{sherk1} and \eqref{sherk2} are satisfied by Lemma~\ref{sherk}.
\end{proof}

\subsection{Maximal partial ovoids of $\cW(5, q)$, $q$ even, of size $2q^2-q+1$}

Assume $q$ to be even and let $\cW(5, q)$ be the symplectic polar space of $\PG(5, q)$ induced by the totally isotropic subspaces of $\F_{q}^6$ with respect to the non-degenerate alternating form given by 
\begin{align*}
x_1 y_2 + x_2 y_1 + x_3 y_4 + x_4 y_3 + x_5 y_6 + x_6 y_5. 
\end{align*}
Recall that $\s$ is the symplectic polarity of $\PG(5, q)$ associated with $\cW(5, q)$. Fix $\delta_i \in \F_q$ such that the polynomial $X^2+X+\delta_i$ is irreducible over $\F_q$, $i = 1,2$. Let $\Delta_1$ and $\Delta_2$ be the three-spaces given by $X_5 = X_6 = 0$ and $X_3 = X_4 = 0$, respectively; consider the three-dimensional elliptic quadrics $\cE_i \subset \Delta_i$ defined as follows 
\begin{align*}
& \cE_1 : X_1 X_2 + X_3^2+X_3X_4 + \delta_1 X_4^2 = 0, \\
& \cE_2 : X_1 X_2 + X_5^2+X_5X_6 + \delta_2 X_5^2 = 0.
\end{align*}
Denote by $\sigma$ the plane of $\Delta_1$ spanned by $U_1, U_2, U_3$; hence $\cE_1 \cap \sigma$ is a conic and $U_3^\s \cap \Delta_1 = \sigma$. In the hyperplane $U_3^\s: X_4 = 0$, let $\cP_1$ be the cone having as vertex the point $U_3$ and as base $\cE_2$ and let $\cP_2$ be the cone having as vertex the line $\Delta_1^\s$ and as base the conic $\cE_1 \cap \sigma$. Set 
\begin{align*}
& \cA = \cP_1 \cap \cP_2 = \{(1, c^2+cd+\delta_2d^2, \sqrt{c^2+cd+\delta_2d^2}, 0, c,d) \mid c, d \in \F_q\} \cup \{U_2\}.
\end{align*}

\begin{lemma}
The point sets $\cE_1$ and $\cA$ are partial ovoids of $\cW(5, q)$ of size $q^2+1$.
\end{lemma}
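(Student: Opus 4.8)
The plan is to verify two claims for each of the two point sets: first that each set has exactly $q^2+1$ points, and second that no line of $\cW(5,q)$ meets either set in two points. For $\cE_1$, the cardinality is immediate since a three-dimensional elliptic quadric $\cE_1 \subset \Delta_1$ has $q^2+1$ points by the standard count. For $\cA$, I would observe that the parametrization $(c,d) \mapsto (1, c^2+cd+\delta_2 d^2, \sqrt{c^2+cd+\delta_2 d^2}, 0, c, d)$ is injective on $\F_q^2$ (the last two coordinates recover $c$ and $d$), so $\cA$ consists of $q^2$ points together with $U_2$, giving $q^2+1$; here I would also note that the square-root map $x \mapsto \sqrt{x} = x^{q/2}$ is well-defined and bijective on $\F_q$ since $q$ is even, so the third coordinate is unambiguous, and that $c^2+cd+\delta_2 d^2 \neq 0$ for $(c,d)\neq(0,0)$ because $X^2+X+\delta_2$ is irreducible, which shows the affine points of $\cA$ indeed lie off the relevant coordinate subspaces.

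For the partial ovoid property of $\cE_1$: since $\cE_1$ lies in the three-space $\Delta_1$, it suffices to check that $\Delta_1$ contains no totally isotropic plane of $\s$ through a secant line of $\cE_1$, equivalently that the restriction of $\s$ to $\Delta_1$ is a non-degenerate symplectic polarity there and that $\cE_1$ is an ovoid of the induced $\cW(3,q)$. Looking at the form $x_1y_2+x_2y_1+x_3y_4+x_4y_3$ on $\Delta_1$, this is non-degenerate alternating (recall $q$ is even), so $\Delta_1$ carries a $\cW(3,q)$, and an elliptic quadric of $\PG(3,q)$ that is totally isotropic-free for this polarity is a classical ovoid of $\cW(3,q)$; I would verify this by a direct computation showing every tangent plane of $\cE_1$ equals the polar plane of its tangency point, which is the usual characterization. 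Since a line of the ambient $\cW(5,q)$ meeting $\cE_1$ twice would be a secant of $\cE_1$ lying inside $\Delta_1$ and isotropic, hence a line of the induced $\cW(3,q)$ through two points of the ovoid, there is none.

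For $\cA$ the argument is more delicate and I expect this to be the main obstacle. The set $\cA$ is defined as the intersection of the two cones $\cP_1$ and $\cP_2$ inside the hyperplane $U_3^{\s}: X_4=0$. A line of $\cW(5,q)$ joining two points of $\cA$ lies in $U_3^{\s}$ and is isotropic; I would take two generic points $P_{c,d}$ and $P_{c',d'}$ of $\cA$ (plus separately the cases involving $U_2$), write down the Plücker/Gram condition $\beta(P_{c,d},P_{c',d'})=0$ for the joining line to be isotropic, and show this forces $(c,d)=(c',d')$. Concretely, plugging the parametrization into the alternating form $x_1y_2+x_2y_1+x_3y_4+x_4y_3+x_5y_6+x_6y_5$ gives, after using $X_4=0$, an expression of the shape $(c^2+cd+\delta_2 d^2) + (c'^2+c'd'+\delta_2 d'^2) + (cd'+c'd)$, which one recognizes as a value of the (additive, since $q$ is even) anisotropic quadratic form $Q(c,d)=c^2+cd+\delta_2 d^2$ evaluated at $(c+c', d+d')$; since $Q$ is anisotropic this vanishes only when $c=c'$ and $d=d'$. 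The cases where one of the two points is $U_2$ are handled separately and easily by the same substitution. I would finish by noting $\cA \subseteq \cW(5,q)$ as a point set (each $P_{c,d}$ and $U_2$ is a point of $\PG(5,q)$, and all points lie on $\cW(5,q)$ since $\cW(5,q)$ contains every point of $\PG(5,q)$), completing the proof that both $\cE_1$ and $\cA$ are partial ovoids of size $q^2+1$.
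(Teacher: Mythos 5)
Your proof is correct. For $\cE_1$ you argue exactly as the paper does: the restriction of the alternating form to $\Delta_1$ is non-degenerate, $\cE_1$ is (for $q$ even) an ovoid of the induced $\cW(3,q)$ because the polarization of its quadratic form is that symplectic form, and any line of $\cW(5,q)$ through two of its points would lie in $\Delta_1$. For $\cA$, however, you take a genuinely different route. The paper projects $\cA$ from the cone vertex $U_3$ onto $\Delta_2$, observes that this is a bijection onto $\cE_2$, and transfers the ovoid property of $\cE_2$ in $\Delta_2\cap\cW(5,q)$ back to $\cA$ (a totally isotropic line of $U_3^{\s}$ through two points of $\cA$ would span a totally isotropic plane with $U_3$ and hence project to a line of $\Delta_2\cap\cW(5,q)$ secant to $\cE_2$). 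You instead compute $\beta(P_{c,d},P_{c',d'})=Q(c+c',d+d')$ with $Q(c,d)=c^2+cd+\delta_2 d^2$ directly from the parametrization and invoke anisotropy of $Q$; this identity is correct (the cross terms $cd'+c'd$ arise exactly from polarizing $Q$ in characteristic $2$, and the third coordinates contribute nothing since $X_4=0$ on $\cA$), and the $U_2$ case gives $\beta(U_2,P_{c,d})=1$. Your computation is more self-contained and makes the key identity explicit, while the paper's projection viewpoint is what sets up the bijection $\cA\leftrightarrow\cE_2$ that is reused in the subsequent maximality proof, so both have their merits.
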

\begin{proof}
The elliptic quadric $\cE_1$ is an ovoid of $\Delta_1 \cap \cW(5, q)$. Hence it is a partial ovoid of $\cW(5, q)$. The set $\cA$ is contained in $U_3^\s$ and each of the lines obtained by joining $U_3$ with a point of $\cA$ intersects $\Delta_2$ in a point of $\cE_2$. In particular this gives a bijective correspondence between the points of $\cA$ and those of $\cE_2$. In order to see that $\cA$ is a partial ovoid of $\cW(5, q)$ note that the elliptic quadric $\cE_2$ is an ovoid of $\Delta_2 \cap \cW(5, q)$. 
\end{proof}

\begin{theorem}\label{partial-symp5-even}
The point set $\cA \cup (\cE_1 \setminus \sigma)$ is a maximal partial ovoids of $\cW(5, q)$ of size $2q^2-q+1$.
\end{theorem}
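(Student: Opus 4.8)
The plan is to prove the partial ovoid property and maximality separately, leaning on the preceding two lemmas. For the count, note that $\cA\subseteq U_3^\s=\{X_4=0\}$ while $\cE_1\subseteq\Delta_1=\{X_5=X_6=0\}$, so any common point of $\cA$ and $\cE_1$ lies in $\sigma$; from the explicit form of $\cA$ one has $\cA\cap\sigma=\{U_1,U_2\}$ and $U_1,U_2\in\cE_1$, hence $\cA\cap(\cE_1\setminus\sigma)=\emptyset$. Since $\cE_1\cap\sigma$ is a conic, $|\cE_1\setminus\sigma|=q^2-q$ and the union has size $(q^2+1)+(q^2-q)=2q^2-q+1$. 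As $\cE_1$ and $\cA$ are already partial ovoids, it remains to rule out a line of $\cW(5,q)$ joining $A\in\cA$ to $E\in\cE_1\setminus\sigma$; writing $\beta$ for the defining alternating form, such a line lies in $\cW(5,q)$ iff $\beta(A,E)=0$ (as $\beta$ is alternating, both points are absolute). Normalising $E=(e_1,e_2,e_3,1,0,0)$ with $e_1e_2+e_3^2+e_3+\delta_1=0$, one computes $\beta(U_2,E)=e_1\ne0$ (else $e_3$ would be a root of the irreducible $X^2+X+\delta_1$), and for $A=(1,w,\sqrt{w},0,c,d)$ with $w=c^2+cd+\delta_2d^2$ one gets $\beta(A,E)=e_2+we_1+\sqrt{w}$; if this vanished, substituting $e_2=we_1+\sqrt{w}$ into the quadric relation and using $w=(\sqrt{w})^2$ would give $(\sqrt{w}\,e_1+e_3)^2+(\sqrt{w}\,e_1+e_3)+\delta_1=0$, again impossible over $\F_q$. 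Hence $\cA\cup(\cE_1\setminus\sigma)$ is a partial ovoid.

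For maximality it suffices to show that for every point $P\notin\cA\cup(\cE_1\setminus\sigma)$ the polar hyperplane $P^\s$ meets the set (necessarily in a point $\ne P$), for then $P$ cannot be added. Suppose first that the fourth coordinate $p_4$ of $P$ is nonzero, i.e. $P\notin U_3^\s$. Then $P\notin\Delta_1^\s=\langle U_5,U_6\rangle$, so $\pi_P:=P^\s\cap\Delta_1$ is a plane of $\Delta_1$, and $\pi_P\cap\cE_1=P^\s\cap\cE_1$ is a single point or a conic (a plane section of the elliptic quadric $\cE_1$). I claim $\pi_P\cap\cE_1\not\subseteq\sigma$: if it were a conic contained in $\sigma$ it would span $\pi_P$, forcing $\pi_P=\sigma$ and $P\in\sigma^\s=\langle U_3,U_5,U_6\rangle\subseteq\{X_4=0\}$ — contradiction; if it were a single point $T\in\cE_1\cap\sigma$, then $\pi_P$ is the tangent plane of $\cE_1$ at $T$, which equals $T^\s\cap\Delta_1$ (since $\cE_1$ is an ovoid of $\cW(5,q)\cap\Delta_1$, the $q+1$ lines of $\cW(5,q)$ through $T$ inside $\Delta_1$ meet $\cE_1$ only at $T$, so $T^\s\cap\Delta_1$ is not a secant plane), whence $P\in\langle T,U_5,U_6\rangle\subseteq\{X_4=0\}$ — contradiction again. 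Thus $P^\s$ contains a point of $\cE_1\setminus\sigma$.

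It remains to treat $P\in U_3^\s$. If $P=U_3$ we are done, since $U_3^\s=\{X_4=0\}\supseteq\cA$; otherwise $(p_1,p_2,p_5,p_6)\ne(0,0,0,0)$. Recall from the proof that $\cA$ is a partial ovoid that the lines $\langle U_3,A\rangle$, $A\in\cA$, lie in $\cW(5,q)$ and define a bijection $A\leftrightarrow A'$ between $\cA$ and $\cE_2$, where $A'$ is the point of $\langle U_3,A\rangle$ lying in $\Delta_2$. Since $\beta(P,U_3)=p_4=0$, we have $\beta(P,A)=\beta(P,A')$ for every $A\in\cA$; moreover, on $\Delta_2=\{X_3=X_4=0\}$ the functional $A'\mapsto\beta(P,A')$ has coefficient vector $(p_2,p_1,p_6,p_5)\ne0$, so its kernel is a plane of $\Delta_2$, which meets the elliptic quadric $\cE_2$ in a nonempty set. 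Any point of $\cE_2$ in this kernel produces a point $A\in\cA$ with $\beta(P,A)=0$, i.e. $A\in P^\s$. This proves the partial ovoid is maximal.

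The crux is the second paragraph: one must be sure that \emph{every} point whose polar hyperplane misses $\cE_1\setminus\sigma$ lies in $U_3^\s$, so that the logically independent argument of the third paragraph, run through $\cA$ and $\cE_2$, disposes of exactly those points. This reduces to recognising the bad points as lying on the planes $\langle T,U_5,U_6\rangle$ ($T\in\cE_1\cap\sigma$) and $\langle U_3,U_5,U_6\rangle$, all inside $\{X_4=0\}=U_3^\s$, and it uses the characteristic two facts that the tangent planes of $\cE_1$ coincide with its symplectic polar planes and that a plane section of an elliptic quadric of $\PG(3,q)$ is never empty. The rest is routine coordinate bookkeeping, and a direct parametric verification of both parts is also feasible.
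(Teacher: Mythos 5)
Your proof is correct. The partial-ovoid half is in substance identical to the paper's: both reduce to showing that $\beta(A,E)=0$ for $A\in\cA$, $E\in\cE_1\setminus\sigma$ would force the irreducible polynomial $X^2+X+\delta_1$ to have a root in $\F_q$ (the paper normalises $E$ on its first coordinate and phrases this as $\sqrt{c^2+cd+\delta_2d^2}$ being a root of $X^2+bX+a^2+ab+\delta_1b^2$; you normalise on $X_4$ and complete the square — same computation). The maximality half is organised differently. The paper starts from the hypothesis $|Q^\s\cap(\cE_1\setminus\sigma)|=0$, deduces that $Q^\s\cap\Delta_1$ is either $\sigma$ or a tangent plane at a point $R$ of the conic $\cE_1\cap\sigma$, and in the latter case uses the cone $\cP_2$ with vertex $\Delta_1^\s$ to exhibit a point of $\cA$ on the totally isotropic line $RQ$. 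You instead split on the coordinate $p_4$: for $p_4\ne 0$ you prove the contrapositive of the paper's first step (any point whose polar misses $\cE_1\setminus\sigma$ lies in $\{X_4=0\}$), and for $p_4=0$ you transfer the linear condition $\beta(P,\cdot)=0$ through the bijection $\cA\leftrightarrow\cE_2$ given by projection from $U_3$ and invoke the nonemptiness of plane sections of the elliptic quadric $\cE_2$. Your second case is slightly stronger and more uniform than what the paper needs — it shows $P^\s$ meets $\cA$ for \emph{every} $P\ne U_3$ in $U_3^\s$, not just for the exceptional points — at the cost of an extra (correct) verification that the functional $(p_2,p_1,p_6,p_5)$ is nonzero on $\Delta_2$. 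Both routes rest on the same two facts: tangent planes of $\cE_i$ coincide with symplectic polar planes in characteristic two, and plane sections of elliptic quadrics are nonempty. Your explicit disjointness check $\cA\cap(\cE_1\setminus\sigma)=\emptyset$ for the size count is a useful detail the paper leaves implicit.
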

\begin{proof}
Taking into account the previous lemma, it is enough to show that if $P$ is a point of $\cE_1 \setminus \sigma$, then $|P^\s \cap \cA| = 0$. By contradiction let $P = (1, a^2+ab+\delta_1b^2, a, b, 0, 0) \in \cE_1 \setminus \sigma$ and $R = (1, c^2+cd+\delta_2d^2, \sqrt{c^2+cd+\delta_2d^2}, 0, c, d) \in \cA$, for some $a, b, c, d \in \F_q$, $b \ne 0$, with $R \in P^\s$. Then $\sqrt{c^2+cd+\delta_2d^2} \in \F_q$ is a root of $X^2+bX+a^2+ab+\delta_1 b^2$, which is irreducible over $\F_q$, a contradiction.   

In order to prove the maximality, let $Q$ be a point not in $\cA \cup (\cE_1 \setminus \sigma)$ and assume that $|Q^\s \cap (\cE_1 \setminus \sigma)| = 0$. There are two possibilities: either $Q^\s \cap \Delta_1$ coincides with $\sigma$ or $Q^\s \cap \Delta_1$ is a plane meeting $\cE_1$ in exactly one point of the conic $\cE_1 \cap \sigma$. In the former case $Q^\s$ meets $\cA$ in the points $\{U_1, U_2\}$. If the latter possibility occurs, let $R$ be the point of the conic $\cE_1 \cap \sigma$  contained in $Q^\s \cap \Delta_1$; since $R^\s \cap \Delta_1 \subset Q^\s$, the point $Q$ lies in the plane spanned by $R$ and $\Delta_1^\s$ . Hence the line joining $R$ and $Q$ is a line of $\cW(5, q)$ and contains a unique point of $\cA$.
\end{proof}

\begin{remark}
Some computations performed with Magma \cite{magma} show that the largest partial ovoid of $\cW(5, q)$, has size $7$ for $q = 2$ and $13$ for $q = 3$. In particular, $\cW(5, 2)$ has a unique partial ovoid of size $7$, up to projectivities, which coincides with the partial ovoids described above.
\end{remark}

\section{Partial ovoids of Hermitian polar spaces}\label{Herm}

Let $\cH(2n, q^2)$, $n \ge 2$, be the Hermitian polar space of $\PG(2n, q^2)$ and let $\perp$ be its associated unitary polarity. Fix a point $P \notin \cH(2n, q^2)$ and a pointset $\cT \subset P^\perp$ suitably chosen. In this section we explore the partial ovoids of $\cH(2n, q^2)$ that are constructed by taking the points of $\cH(2n, q^2)$ on the lines obtained by joining $P$ with the points of $\cT$. It turns out that in order to get a partial ovoid, the set $\cT$ has to intersect the lines that are tangent or contained in $P^\perp \cap \cH(2n, q^2)$ in at most one point.

\subsection{Tangent-sets of $\cH(2n-1, q^2)$, $n \in \{2, 3, 4\}$}

Let $\cH(2n-1, q^2)$ be a non-degenerate Hermitian variey of $\PG(2n-1, q^2)$. We introduce the notion of {\em tangent set} of $\cH(2n-1, q^2)$.
\begin{defin}
A {\bf tangent-set} of a Hermitian variety $\cH(2n-1, q^2)$ is a set $\cT$ of points of $\PG(2n-1, q^2)$ such that every line that is either tangent or contained in $\cH(2n-1, q^2)$ has at most one point in common with $\cT$. A tangent-set is said to be {\em maximal} if it is not contained in a larger tangent-set.  
\end{defin}
Here the main aim is to show that starting from a (partial) ovoid of $\cW(2n-1, q)$ it is possible to obtain a tangent set of $\cH(2n-1, q^2)$. In order to do that some preliminary results are needed. 

\begin{lemma}\label{ovoid}
An ovoid of $\cW(3, q) \subset \cH(3, q^2)$ is a maximal partial ovoid of $\cH(3, q^2)$. 
\end{lemma}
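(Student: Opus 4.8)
The plan is to exploit the well-known containment $\cW(3,q)\subset\cH(3,q^2)$: fix the symplectic polar space $\cW(3,q)$ inside $\cH(3,q^2)$, where the canonical Baer subgeometry $\PG(3,q)$ carries the symplectic polarity $\s$ that is the restriction of the unitary polarity $\perp$ of $\PG(3,q^2)$. An ovoid $\cO$ of $\cW(3,q)$ has size $q^2+1$. I want to show that $\cO$ is already a partial ovoid of $\cH(3,q^2)$ — i.e., no generator (a line lying on $\cH(3,q^2)$) meets $\cO$ in two points — and that it is maximal, i.e., every point of $\PG(3,q^2)\setminus\cO$ lies on a line of $\cH(3,q^2)$ through some point of $\cO$.

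**Partial ovoid.** First I would check that $\cO$ is a partial ovoid of $\cH(3,q^2)$. Suppose a line $\ell$ of $\cH(3,q^2)$ (a generator) contains two points $P,R$ of $\cO$. Since $P,R\in\PG(3,q)$, the line $\ell$ meets the Baer subgeometry $\PG(3,q)$ in at least two points, hence $\ell=\overline{\ell_0}$ for a line $\ell_0$ of $\PG(3,q)$. Being contained in $\cH(3,q^2)$, $\ell$ is totally isotropic for $\perp$, so $\ell_0$ is totally isotropic for $\s$, i.e., $\ell_0$ is a generator of $\cW(3,q)$ containing the two points $P,R$ of $\cO$ — contradicting that $\cO$ is an ovoid of $\cW(3,q)$. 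Hence $\cO$ is a partial ovoid of $\cH(3,q^2)$.

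**Maximality.** Now let $Q\in\PG(3,q^2)\setminus\cO$; I must produce a generator of $\cH(3,q^2)$ through $Q$ meeting $\cO$. Consider the tangent hyperplane (plane) $Q^\perp$. If $Q\in\cH(3,q^2)$, then $Q^\perp$ is a plane meeting $\cH(3,q^2)$ in a degenerate Hermitian variety — a cone with vertex $Q$ over a Hermitian curve $\cH(1,q^2)=\{q+1 \text{ points}\}$ — whose lines through $Q$ are exactly the $q^2+1$ generators of $\cH(3,q^2)$ on $Q$; if $Q\notin\cH(3,q^2)$, then $Q^\perp$ is secant and meets $\cH(3,q^2)$ in a non-degenerate $\cH(2,q^2)$ (a Hermitian curve), and the generators meeting $Q$ — well, there are none, so in that case I instead argue via a generator of $\cH(3,q^2)$ that is skew to $Q^\perp$ appropriately. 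The cleaner approach: count. An ovoid $\cO$ of $\cW(3,q)\subset\cH(3,q^2)$ has the property that every plane of $\PG(3,q)$ meets it — in fact every point of $\PG(3,q)$ lies on a generator of $\cW(3,q)$ meeting $\cO$ since $\cO$ is an ovoid. For a point $Q\in\PG(3,q^2)\setminus\PG(3,q)$, I would use that $Q$ and $Q^\perp$ together with the Baer subgeometry interact so that $Q^\perp\cap\PG(3,q)$ is a plane $\pi_0$ of $\PG(3,q)$ (or a line, if $Q$ lies on the extension of a $\perp$-self-polar configuration); then $\pi_0$ contains a point $P$ of $\cO$, the line $\overline{PQ}$ has $P\in\cO\subset\cH(3,q^2)$ and $P\in Q^\perp$ hence $P^\perp\ni Q$, so $\overline{PQ}\subseteq P^\perp$, and since one checks $\overline{PQ}$ is totally isotropic it is a generator of $\cH(3,q^2)$ through $Q$ meeting $\cO$ in $P$.

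**Main obstacle.** The delicate part is the case analysis for $Q\notin\PG(3,q)$: I need to know precisely how the Baer subgeometry $\PG(3,q)$ and its symplectic polarity sit inside $(\PG(3,q^2),\perp)$ — in particular that $Q^\perp\cap\PG(3,q)$ is always a plane (equivalently $Q$ never has its polar plane meeting the subgeometry in a line, which would be the bad case where the argument above might need more care) — and that the line $\overline{PQ}$ I produce is genuinely totally isotropic rather than merely secant to $\cH(3,q^2)$. I expect this to reduce to: $P\in\cO$ is isotropic, $Q\in P^\perp$, and $P^\perp$ restricted to $\cH(3,q^2)$ is a cone with vertex $P$, so every line of $P^\perp$ through $P$ that is not one of the $q+1$ "curve" directions is a generator — so I just need $\overline{PQ}$ not to be one of those finitely many exceptional tangent lines, which can be arranged by a counting or genericity argument, choosing $P$ among the (many) points of $\cO$ in $\pi_0$ if $|\pi_0\cap\cO|>1$, or verifying directly that the unique such $P$ works.
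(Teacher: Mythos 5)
Your first half---that an ovoid $\cO$ of $\cW(3,q)$ is a partial ovoid of $\cH(3,q^2)$---is correct and is exactly what the paper dismisses as obvious: a generator of $\cH(3,q^2)$ through two points of the Baer subgeometry is the extension of a line of $\PG(3,q)$, is totally isotropic, and hence restricts to a generator of $\cW(3,q)$, which meets $\cO$ at most once. The maximality half has a genuine gap. A minor point first: maximality only concerns points of $\cH(3,q^2)\setminus\cO$, since a partial ovoid is a set of points of the polar space; your digression about $Q\notin\cH(3,q^2)$ (``a generator skew to $Q^\perp$'') answers a question that is not being asked. The serious problem is that your main case rests on $Q^\perp\cap\PG(3,q)$ being a plane of the subgeometry, with the line case deferred as a ``bad case''. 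The dichotomy is in fact the opposite of what you assume. For this embedding one may take the Hermitian Gram matrix to be $\eta J$, where $J$ is the alternating Gram matrix of $\cW(3,q)$ and $\eta^q=-\eta$; then the unitary polarity commutes with the Baer involution $\tau$, so $(Q^\perp)^\tau=(Q^\tau)^\perp$, and for every $Q\notin\PG(3,q)$ the plane $Q^\perp$ is not $\tau$-invariant and meets $\PG(3,q)$ in exactly a line. So the case you leave unresolved is the only one that occurs, and it cannot be settled by your argument as stated, because an arbitrary line of $\PG(3,q)$ may miss the ovoid entirely.

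The missing idea---which is the entire content of the paper's one-line proof---is that for $Q\in\cH(3,q^2)\setminus\PG(3,q)$ the unique extended line of $\PG(3,q)$ through $Q$, namely $\langle Q,Q^\tau\rangle$, is automatically an extended \emph{generator} of $\cW(3,q)$: if $v$ is a vector representing $Q$, then the Hermitian form gives $\eta\, v^TJv=0$ since $J$ is alternating, so $\langle Q,Q^\tau\rangle$ is totally isotropic. Equivalently, a non-isotropic line of $\PG(3,q)$ extends to a secant line of $\cH(3,q^2)$ whose $q+1$ Hermitian points are exactly its Baer subline, hence carries no point of $\cH(3,q^2)\setminus\PG(3,q)$. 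That extended generator meets $\cO$ in exactly one point, so $Q$ cannot be adjoined; a point of $\PG(3,q)\setminus\cO$ lies on $q+1$ generators of $\cW(3,q)$, each meeting $\cO$, so it cannot be adjoined either. (Your closing worry about $\langle P,Q\rangle$ being merely tangent is not an issue: a line joining two distinct conjugate points of $\cH(3,q^2)$ is necessarily a generator. Also, your appeal to ``every plane of $\PG(3,q)$ meets $\cO$'' silently uses that an ovoid of $\cW(3,q)$, which exists only for $q$ even, is an ovoid of $\PG(3,q)$; that is true but is itself a theorem.)
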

\begin{proof}
Let $\cX$ be an ovoid of $\cW(3, q) \subset \cH(3, q^2)$. Then obviously $\cX$ is a partial ovoid of $\cH(3, q^2)$. In order to see that $\cX$ is maximal as a partial ovoid of $\cH(3, q^2)$, observe that every point of $\cH(3, q^2) \setminus \cW(3, q)$ lies on a unique extended line of $\cW(3, q)$. 
\end{proof}

Let us fix an element $\i \in \F_{q^2}$ such that $\i + \i^q = 0$ and let $\xi_1, \dots, \xi_q$ be $q$ elements of $\F_{q^2}$ such that 
\begin{align}
& \left\{\i(\xi_i^q - \xi_i) \mid i = 1, \dots, q\right\} = \F_q.  \label{all}
\end{align}
In particular we may assume $\xi_1 = 0$. Let $\cH_i$ be the Hermitian variety given by
\begin{align*}
& X_1 X_{n+1}^q - X_1^q X_{n+1} + \dots + X_{n} X_{2n}^q - X_{n}^q X_{2n} + (\xi_i^q-\xi_i) X_{2n}^{q+1} = 0, \; i = 1, \dots, q.
\end{align*}
Then, if $\Pi$ denotes the hyperplane $X_{2n} = 0$, the set $\{\cH_i \mid i = 1, \dots, q\} \cup \{\Pi\}$ is a pencil of Hermitian varieties whose base locus consists of $\Pi \cap \cH_i$. Let $\Sigma_i$ denote the Baer subgeometry of $\PG(2n-1, q^2)$ consisting of the points
\begin{align*}
 \left\{(a_1, \dots, a_{n-1}, a_n + \xi_i, a_{n+1}, \dots, a_{2n-1}, 1) \mid a_1, \dots, a_{2n-1} \in \F_q, (a_1, \dots, a_{2n-1}) \ne (0, \dots, 0) \right\} & \\
 \cup \left\{(a_1, \dots, a_{2n-1}, 0) \mid a_1, \dots, a_{2n-1} \in \F_q, (a_1, \dots, a_{2n-1}) \ne (0, \dots, 0) \right\} &,
\end{align*}
for  $i = 1, \dots, q$. It is easily seen that $\Sigma_i \subset \cH_i$, for  $i = 1, \dots, q$. Hence $|\cH_1 \cap \left(\Sigma_i \setminus \Pi\right)| = 0$, if $i \ne 1$. Moreover, the lines of $\cH_i$ having $q+1$ points in common with $\Sigma_i$ are those of a symplectic polar space $\cW_i$ of $\Sigma_i$. Let $K$ be the group of projectivities of $\PG(2n-1, q^2)$ of order $q^{2n-1}$ induced by the matrices
\begin{align*}
\begin{pmatrix}
1 & 0 & \dots & 0 & 0 & 0 & \dots & 0 & a_1 \\
0 & 1 & \dots & 0 & 0 & 0 & \dots & 0 & a_2 \\
\vdots & \vdots & \ddots & \vdots & \vdots & \vdots & \ddots & \vdots & \vdots \\
0 & 0 & \dots & 1 & 0 & 0 & \dots & 0 & a_{n-1} \\
-a_{n+1} & -a_{n+2} & \dots & -a_{2n-1} & 1 & a_1 & \dots & a_{n-1} & a_{n} \\
0 & 0 & \dots & 0 & 0 & 1 & \dots & 0 & a_{n+1} \\
\vdots & \vdots & \ddots & \vdots & \vdots & \vdots & \ddots & \vdots & \vdots \\
0 & 0 & \dots & 0 & 0 & 0 & \dots & 0 & a_{2n-1} \\
0 & 0 & \dots & 0 & 0 & 0 & \dots & 0 & 1 \\
\end{pmatrix}, \; a_1, \dots, a_{2n-1} \in \F_q.
\end{align*}
Then $K$ fixes $\cW_i$ and $\cH_i$; furthermore it permutes in a single orbit the $q^{2n-1}$ points of $\Sigma_i \setminus \Pi$, $i = 1, \dots, q$. The polar hyperplane of $U_n$ with respect to $\cH_i$ is $\Pi$; if $\Gamma$ is a $(2n-3)$-space of $\Pi$ such that $U_n \notin \Gamma$ and $\Gamma \cap \Sigma_i$ is a $(2n-3)$-space, then $|\Gamma^\perp \cap \Sigma_i| = q+1$. Hence the $q^{2n-1}$ points of $\bigcup_i^{q} \Sigma_i \setminus \Pi$ lie on $q^{2n-2}$ lines of $\PG(2n-1, q^2)$ through $U_n$. Denote by $\cL$ this set of $q^{2n-2}$ lines, that is 
\begin{align*}
& \cL = \left\{\Gamma^\perp \; \mid \; \Gamma \mbox{ {\em is a} } (2n-3)\mbox{{\em-space of} } \Pi, \; U_n \notin \Gamma, \; |\Gamma \cap \Sigma_i| = \frac{q^{2n-2}-1}{q-1}\right\}.
\end{align*}
Note that $K$ acts transitively on $\cL$. In the next lemma we restrict attention on the case $n = 2$. 

\begin{lemma}\label{HermCur}
Let $\ell \in \cL$. The intersection of $\Pi$ with the tangent lines to $\cH_1$ through a point of $(\ell \cap \Sigma_i) \setminus \Pi$, $i = 2, \dots, q$, is a non-degenerate Hermitian curve of $\Pi$ belonging to the pencil determined by $\ell^\perp$ and $\Pi \cap \cH_1$. Viceversa, if $r$ is a line of $\Pi$ with $|r \cap \Sigma_i| = |r \cap \cH_1| = q+1$ and $\cU_{r}$ is a non-degenerate Hermitian curve of $\Pi$ belonging to the pencil determined by $r$ and $\Pi \cap \cH_1$, then there exists $k$ such that the lines joining a point of $(r^\perp \cap \Sigma_k) \setminus \Pi$ with a point of $\cU_{r}$ are tangent to $\cH_1$.
\end{lemma}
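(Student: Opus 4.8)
The plan is to pick adapted coordinates, reduce to one standard line of $\cL$ by the transitivity of $K$, make the relevant pencil explicit, and then detect tangency through the rank of a $2\times2$ Hermitian matrix. Since $K$ fixes $\cH_1$ (so commutes with its polarity), fixes each $\Sigma_i$ setwise, and is transitive on $\cL$, for the first assertion we may take $\ell=\langle U_2,U_4\rangle$: indeed $\langle U_1,U_3\rangle$ is a line of $\Pi$ avoiding $U_2$ and meeting the Baer subplane $\Sigma_i\cap\Pi$ in a Baer subline, so its polar $\langle U_2,U_4\rangle$ lies in $\cL$. Then $\ell^\perp=\langle U_1,U_3\rangle$ is the line $X_2=0$ of the plane $\Pi$, and $\Pi\cap\cH_1$ is the Hermitian cone $X_1X_3^q-X_1^qX_3=0$ with vertex $U_2$; a direct check shows that the pencil of Hermitian curves of $\Pi$ spanned by $\ell^\perp$ (the form $X_2^{q+1}$) and by $\Pi\cap\cH_1$ consists of the curves $\cU_\nu\colon X_1X_3^q-X_1^qX_3+\nu X_2^{q+1}=0$ with $\nu+\nu^q=0$, together with $\ell^\perp$, and that $\cU_\nu$ is non-degenerate precisely for $\nu\neq0$. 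Finally, the affine points of $\ell$ are the $(0,s,0,1)$, $s\in\F_{q^2}$, with $(0,s,0,1)\in\Sigma_i$ iff $s\in\F_q+\xi_i$; so the points of $(\ell\cap\Sigma_i)\setminus\Pi$ with $i\ge2$ are exactly those with $s\notin\F_q$, and each of them lies off $\cH_1$.

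For the first statement, fix such a point $P=(0,s,0,1)$, $s\notin\F_q$, let $\bar R=(X_1,X_2,X_3,0)\in\Pi$, and let $h$ be the Hermitian form of $\cH_1$. The line $\langle P,\bar R\rangle$ is tangent to $\cH_1$ iff the restriction of $h$ to the line $\langle P,\bar R\rangle$ has rank exactly one; since $h(P,P)=\i^{-1}(s-s^q)\neq0$, this holds iff $h(P,P)h(\bar R,\bar R)-h(P,\bar R)h(\bar R,P)=0$, and computing the four values this determinant is a nonzero scalar times $(s-s^q)(X_1X_3^q-X_1^qX_3)+X_2^{q+1}$. Hence the intersection of $\Pi$ with the tangent lines to $\cH_1$ through $P$ is the curve $\cU_\nu$ with $\nu=(s-s^q)^{-1}\neq0$, a non-degenerate member of the pencil; moreover $s-s^q=\xi_i-\xi_i^q$ depends only on $i$, so one and the same curve $\cU_{\nu_i}$ occurs for all $P\in(\ell\cap\Sigma_i)\setminus\Pi$, and by \eqref{all} the curves $\cU_{\nu_2},\dots,\cU_{\nu_q}$ exhaust the $q-1$ non-degenerate members of the pencil.

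For the converse, $|r\cap\cH_1|=q+1$ forces $U_2\notin r$, so, together with $|r\cap\Sigma_i|=q+1$, the line $\ell:=r^\perp$ lies in $\cL$ and satisfies $\ell^\perp=r$. Applying the first part of the lemma to this $\ell$, the non-degenerate members of the pencil spanned by $r$ and $\Pi\cap\cH_1$ are precisely the intersections with $\Pi$ of the tangent lines to $\cH_1$ through the points of $(r^\perp\cap\Sigma_k)\setminus\Pi$, $k=2,\dots,q$. Hence the given $\cU_r$ is the curve associated with some such $k$, and then for every $Q\in(r^\perp\cap\Sigma_k)\setminus\Pi$ the lines joining $Q$ to the points of $\cU_r$ are, by construction, tangent to $\cH_1$.

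The substantive step is the $2\times2$ determinant identity together with the fact that a line meets a non-degenerate Hermitian surface in one point exactly when the restricted Hermitian form is degenerate but nonzero; the rest is bookkeeping between $\F_q$-valued and purely imaginary quantities, plus checking that $K$ indeed fixes $\cH_1$, each $\Sigma_i$ and $\Pi$ and acts transitively on $\cL$, which legitimises the normalisation of $\ell$.
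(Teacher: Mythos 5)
Your proposal is correct and follows essentially the same route as the paper's proof: normalise to $\ell=\langle U_2,U_4\rangle$ via the transitivity of $K$ on $\cL$, compute the locus of feet of tangent lines through a point $(0,s,0,1)$ with $s\notin\F_q$ (the paper states the resulting equation \eqref{eqcurve} directly, you derive it via the $2\times2$ Gram determinant), identify it as the non-degenerate pencil member with parameter governed by $\i(\xi_i^q-\xi_i)$, and invert this correspondence using \eqref{all} for the converse. Your write-up is somewhat more explicit about the rank-one tangency criterion and the exhaustion of the $q-1$ non-degenerate members of the pencil, but the argument is the same.
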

\begin{proof}
If $P \in (\ell \cap \Sigma_i) \setminus \Pi$, $i = 2, \dots, q$, we may assume $P = (0, a_2+\xi_i, 0,1)$, $\xi_i \ne 0$, $a_2 \in \F_q$. Then the points of $(x, y, z, 0)$ of $\Pi$ lying on a line through $P$ and tangent to $\cH_1$ satisfy 
\begin{align}
& \i^2 y^{q+1} + \i \alpha (x^q z - x z^q) = 0, \label{eqcurve} 
\end{align}
where $\alpha = \i (\xi_i^q - \xi_i)$. Hence they form a non-degenerate Hermitian curve of $\Pi$ belonging to the pencil determined by $r$ and $\Pi \cap \cH_1$. Viceversa, if $r$ is a line of $\Pi$ with $|r \cap \Sigma_i| = |r \cap \cH_1| = q+1$, we may assume that $r$ is the line joining $U_1$ and $U_3$. It follows that $\cU_r$ is given by \eqref{eqcurve}, for some $\alpha \in \F_q \setminus \{0\}$. Thus if $\alpha = \i (\xi_k^q - \xi_k)$, the lines joining a point of $(r^\perp \cap \Sigma_k) \setminus \Pi$ with a point of $\cU_{r}$ are tangent to $\cH_1$.
\end{proof}

\begin{lemma}\label{tangent}
Let $\ell$ be a line that is either tangent or contained in $\cH_1$. Then $|\ell \cap \left( \bigcup_i^{q} \Sigma_i \right)| \in \{0, 1, q+1\}$. In particular, if $|\ell \cap \left( \bigcup_i^{q} \Sigma_i \right)| = q+1$ then there exists $k$ such that $\ell \cap \Sigma_k = q+1$ and $\ell \subset \cH_k$.
\end{lemma}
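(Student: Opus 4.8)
The plan is to run a case analysis on the position of $\ell$ relative to the hyperplane $\Pi$ and to the pencil $\mathcal P=\{\cH_i\mid i=1,\dots,q\}\cup\{\Pi\}$, leaning throughout on three structural facts from the construction: (i) by \eqref{all} the cosets $\xi_i+\F_q$ are pairwise distinct and exhaust $\F_{q^2}$, so a point of $\bigcup_i\Sigma_i$ lying off $\Pi$ lies in exactly one $\Sigma_i$, and moreover the affine part of $\bigcup_i\Sigma_i$ is the affine part of the cone $\cS$ with vertex $U_n$ over the Baer subgeometry of $X_n=0$ with coordinates in $\F_q$ (equivalently $\cS$ is the union of the $q^{2n-2}$ lines of $\cL$ together with their points on $\Pi$ and the vertex $U_n$); (ii) $\Sigma_\infty:=\Sigma_i\cap\Pi$ is independent of $i$, spans $\Pi$, and lies in the base locus $\cB$ of $\mathcal P$; and (iii) $\Sigma_i\cap\cH_j\subseteq\cB$ for $i\ne j$, since $\Sigma_i\subseteq\cH_i$ and $\cH_i\cap\cH_j=\cB$. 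One checks that a line of $\PG(2n-1,q^2)$ meets $\cS$ in $0$, $1$, $q+1$ or $q^2+1$ points, the last only if it passes through $U_n$. First, if $\ell\subseteq\Pi$ then $\ell\cap\bigcup_i\Sigma_i=\ell\cap\Sigma_\infty$, which has $0$, $1$ or $q+1$ points as $\Sigma_\infty$ is a Baer subgeometry of $\Pi$; if it has $q+1$, these lie in $\cB\subseteq\cH_1$, so $\ell$ is not tangent to $\cH_1$, whence $\ell\subseteq\cH_1$ and $k=1$ works.

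From now on $\ell\not\subseteq\Pi$, and I put $P_\infty=\ell\cap\Pi$. Next suppose $\ell$ lies in some member of $\mathcal P$; since $\ell\not\subseteq\Pi$ this member is some $\cH_k$. Then for $i\ne k$ one has $\ell\cap\Sigma_i\subseteq\Sigma_i\cap\cH_k\subseteq\cB\subseteq\Pi$, so $\ell\cap\Sigma_i\subseteq\{P_\infty\}$; together with $\Sigma_i\cap\Sigma_j=\Sigma_\infty\subseteq\Sigma_k$ (which handles whether or not $P_\infty\in\Sigma_\infty$) this gives $\ell\cap\bigcup_i\Sigma_i=\ell\cap\Sigma_k$, which is $0$, $1$ or $q+1$ points, and in the last case $k$ works. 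This in particular disposes of $\ell\subseteq\cH_1$, so the only case left is: $\ell$ tangent to $\cH_1$ and not contained in any member of $\mathcal P$.

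In this main case I first claim $P_\infty\notin\cB$. Indeed, each affine point of $\ell$ lies on exactly one $\cH_i$ (the defining form of $\cH_i$, divided by $X_{2n}^{q+1}$, takes a value in a fixed coset, and the $q$ possible coefficients $\xi_i^q-\xi_i$ are distinct and fill that coset by \eqref{all}); so if $P_\infty\in\cB$, which forces $P_\infty$ onto every $\cH_i$, we get $\sum_{i}|\ell\cap\cH_i|=q^2+q$, and with each term in $\{1,q+1\}$ this forces all $|\ell\cap\cH_i|=q+1$, contradicting tangency to $\cH_1$. The same count with $P_\infty\notin\cB$ gives $\sum_i|\ell\cap\cH_i|=q^2$, so exactly one member (namely $\cH_1$) meets $\ell$ in one point among the $\cH_i$, and $\Pi$ meets $\ell$ in one point — these two being the only members tangent to $\ell$. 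Now $P_\infty\notin\cB\supseteq\Sigma_\infty$ gives $\ell\cap\Sigma_\infty=\emptyset$; a direct check shows the defining form of $\cH_1$ vanishes on all points of $\cS\cap\Pi$, while $P_\infty\notin\cH_1$, so $\ell$ meets $\cS$ only in affine points, whence $\ell\cap\bigcup_i\Sigma_i=\ell\cap\cS$. Also $\ell\not\subseteq\cS$: that would force $U_n\in\ell$, and then $\ell$ is a line through $U_n\in\cH_1$ not lying in its tangent hyperplane $U_n^\perp=\Pi$, hence secant to or contained in $\cH_1$, not tangent. Therefore $|\ell\cap\cS|\in\{0,1,q+1\}$.

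It remains to exclude $|\ell\cap\cS|=q+1$, and this is the delicate step. In that case the $q+1$ affine points of $\ell\cap\cS$ lie in $\bigcup_i\Sigma_i$, so by the pigeonhole principle over the $q$ subgeometries some $\Sigma_i$ contains at least two of them, making $\ell$ a Baer line of $\Sigma_i$ with $|\ell\cap\Sigma_i|=q+1$. If $i=1$ these points lie in $\cH_1$, contradicting tangency. If $i\ne1$, let $\tau_i$ be the Baer involution with $\mathrm{Fix}(\tau_i)=\Sigma_i$; it fixes $\cH_i$ (since $\Sigma_i$ is a Baer subgeometry of $\cH_i$ inducing a symplectic polar space) and fixes $\Pi$ (as $\Sigma_\infty$ spans $\Pi$), hence fixes the pencil $\mathcal P$, and it fixes $\ell$ because $\ell$ meets $\Sigma_i$ in $q+1$ collinear points. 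Writing $\{T\}=\ell\cap\cH_1$, the set $\tau_i(\cH_1)\cap\ell=\tau_i(\{T\})$ is a single point, so $\tau_i(\cH_1)$ is a member of $\mathcal P$ tangent to $\ell$; by the previous paragraph this forces $\tau_i(\cH_1)\in\{\cH_1,\Pi\}$, and $\tau_i(\cH_1)=\Pi$ is impossible (it would give $\cH_1=\tau_i(\Pi)=\Pi$), so $\tau_i(\cH_1)=\cH_1$ and hence $\tau_i(T)=T$. Then $T\in\mathrm{Fix}(\tau_i)\cap\cH_1=\Sigma_i\cap\cH_1=\Sigma_\infty\subseteq\Pi$, forcing $T=P_\infty$ and contradicting $T\in\cH_1$, $P_\infty\notin\cH_1$. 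So $|\ell\cap\cS|\le1$ here, which completes the case analysis. The main obstacle, as this makes clear, is exactly excluding a line tangent to $\cH_1$ that avoids every member of the pencil yet is a Baer line of one of the $\Sigma_i$; it is the Baer-involution argument, built on the fact that $\tau_i$ preserves both $\cH_i$ and $\Pi$ and hence the whole pencil, that rules this out.
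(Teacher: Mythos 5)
Your proof is correct, and it takes a genuinely different route from the paper's. The paper argues directly in coordinates: it takes two points $P_i\in\ell\cap\Sigma_i$ (with $i\ne 1$, which is forced since a tangent line cannot carry two points of $\Sigma_1\subset\cH_1$) and $P_j\in\ell\cap\Sigma_j$, normalizes $P_i$ by the transitive group $K$, writes the condition $P_j+\lambda P_i\in\cH_1$ as a Hermitian equation of degree $q+1$ in $\lambda$, and checks that it has a unique root --- i.e.\ $\ell$ is tangent --- exactly when $\xi_j+a_n=\xi_i$; by \eqref{all} this forces $j=i$ and $a_n=0$, so the two points span a totally isotropic line of $\cW_i$ whose extension lies in $\cH_i$. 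You replace this computation by three structural ingredients: the affine part of $\bigcup_i\Sigma_i$ is a cone over a Baer subgeometry (so a line off the vertex meets it in $0$, $1$ or $q+1$ points); the count $\sum_i|\ell\cap\cH_i|\in\{q^2,q^2+q\}$, which pins down $\cH_1$ and $\Pi$ as the only members of the pencil meeting $\ell$ in one point; and the Baer involution $\tau_i$ of $\Sigma_i$, which stabilizes $\cH_i$ and $\Pi$, hence permutes the pencil, hence fixes $\cH_1$ and its point of contact with $\ell$, forcing that point into $\Sigma_i\cap\cH_1=\Sigma_i\cap\Pi$ and contradicting $P_\infty\notin\cH_1$. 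I checked the two facts you use only parenthetically and they do hold: $\tau_i(\cH_i)=\cH_i$ because $\Sigma_i\subset\cH_i$ forces the Gram matrix of $\cH_i$, in a frame of $\Sigma_i$, to be $\i$ times an alternating matrix over $\F_q$ (the classical commuting-polarities fact), and $\tau_i$ permutes the pencil because the forms of the two fixed members pull back to $\F_q$-scalar multiples of themselves. What your approach buys is a coordinate-free explanation of why a tangent line cannot pick up a Baer subline of some $\Sigma_i$ with $i\ne1$; what it costs is this extra machinery, whereas the paper's single displayed equation settles the same point in a few lines and additionally identifies the exceptional lines as the totally isotropic lines of $\cW_i$.
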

\begin{proof}
If $\ell$ is contained in $\cH_1$, then there is nothing to prove. If $\ell$ is tangent, let $P_i, P_j$ be two points belonging to $\ell \cap \Sigma_i$ and $\ell \cap \Sigma_j$, respectively, where $i \ne 1$. Since $K$ is transitive on points of $\Sigma_i \setminus \Pi$, we may assume $P_i = (0, \dots, 0, \xi_i, 0, \dots, 0, 1)$. If $P_j = (a_1, \dots, a_{n-1}, a_n + \xi_j, a_{n+1}, \dots, a_{2n-1}, 1)$ for some $a_1, \dots, a_{2n-1} \in \F_q$ not all zero, and $\lambda \in \F_{q^2}$, then the point $P_j + \lambda P_i$ belongs to $\cH_1$ if and only if $\lambda$ is a root of 
\begin{align*}
\i(\xi_i - \xi_j) X^{q+1} + \i (\xi_j - \xi_i^q + a_n) X^q + \i^q (\xi_j^q - \xi_i + a_n) X + \i (\xi_j - \xi_j^q) = 0. 
\end{align*}
Some straightforward calculations show that $\ell$ is tangent if and only if $\xi_j = \xi_i - a_n$. If $i \ne j$, then $\i (\xi_j^q - \xi_j) = \i (\xi_i^q - \xi_i)$, contradicting \eqref{all}. Therefore $i = j$ and $a_n = 0$. This means that $|\ell \cap \Sigma_i| = q+1$. Moreover $\ell \subset \cH_i$. If $P_j = (a_1, \dots, a_{2n-1}, 0)$ for some $a_1, \dots, a_{2n-1} \in \F_q$, then $P_j \in \Sigma_i \cap \Sigma_j$. Hence $|\ell \cap \Sigma_i| = q+1$ and we may repeat the previous argument.  
\end{proof}

\begin{theorem}\label{tangent-set}
Let $\cO_i$ be a (partial) ovoid of $\cW_i$, $i = 1, \dots, q$, such that $\cO_i \cap \Pi = \cO_j \cap \Pi$, if $i \ne j$. Then $\cT = \bigcup_{i = 1}^{q} \cO_i$ is a tangent-set of $\cH_1$. Moreover if $n = 2$ and $\cO_i$ is an ovoid of $\cW_i$, for every $i = 1, \dots, q$, then $\cT$ is a maximal tangent-set.
\end{theorem}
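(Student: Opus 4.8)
The plan is to verify the tangent-set property line by line, using the trichotomy of Lemma~\ref{tangent} to reduce to cases. Let $\ell$ be a line that is either tangent to or contained in $\cH_1$, and suppose for contradiction that $\ell$ meets $\cT = \bigcup_i \cO_i$ in at least two points, say $P \in \cO_i$ and $Q \in \cO_j$. Since $\cO_i \subset \cW_i \subset \Sigma_i$ and similarly for $j$, we have $P, Q \in \bigcup_k \Sigma_k$, so $|\ell \cap (\bigcup_k \Sigma_k)| \ge 2$; by Lemma~\ref{tangent} this forces $|\ell \cap (\bigcup_k \Sigma_k)| = q+1$ and there is a $k$ with $\ell \cap \Sigma_k$ having $q+1$ points and $\ell \subset \cH_k$. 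The key point is then that both $P$ and $Q$ must lie on $\ell \cap \Sigma_k$: indeed, two distinct Baer sublines $\ell \cap \Sigma_i$ and $\ell \cap \Sigma_j$ of the $\F_{q^2}$-line $\ell$ share at most one point, and if $\ell$ met two distinct $\Sigma_i$'s in full sublines Lemma~\ref{tangent} would again be contradicted; so in fact $i = j = k$ (the only subtlety, handled by the same argument as in Lemma~\ref{tangent}, is a shared point in $\Pi$, which lands in $\cO_i \cap \Pi = \cO_k \cap \Pi$). Thus $P, Q \in \cO_k$ and $\ell$ is a line with $q+1$ points in $\Sigma_k$ contained in $\cH_k$, i.e.\ $\ell$ is one of the lines of the symplectic polar space $\cW_k$ inside $\Sigma_k$ (this is exactly the characterization of $\cW_k$ recalled before Lemma~\ref{HermCur}). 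But $\cO_k$ is a (partial) ovoid of $\cW_k$, so $\ell$ contains at most one point of $\cO_k$, contradicting $P \ne Q$. Hence $\cT$ is a tangent-set of $\cH_1$.

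For the maximality assertion we specialize to $n = 2$ with each $\cO_i$ a genuine ovoid of $\cW_i$ (so $|\cO_i| = q^2 + 1$), and let $N$ be a point of $\PG(3, q^2)$ not in $\cT$; we must exhibit a tangent-or-contained line of $\cH_1$ through $N$ meeting $\cT$ in one point, so that $\cT \cup \{N\}$ fails to be a tangent-set. I would split according to the position of $N$. If $N \in \Sigma_k \setminus \Pi$ for some $k$, then $N$ lies on the symplectic polar space $\cW_k$ of $\Sigma_k$; since $\cO_k$ is an ovoid of $\cW_k$, the generator (line) of $\cW_k$ through $N$ already carrying a point of $\cO_k$ is contained in $\cH_k$, but we need a line tangent to or contained in $\cH_1$ — here I would instead invoke Lemma~\ref{HermCur}: the tangent lines to $\cH_1$ through $N \in (\ell \cap \Sigma_k)\setminus\Pi$ meet $\Pi$ in a non-degenerate Hermitian curve $\cU$, and since an ovoid $\cO_1$ of $\cW_1$ meets $\Pi$ in an ovoid of the curve/quadric in $\Pi$ (a point count, $q+1$ points, forced by $\cO_1 \cap \Pi = \cO_k \cap \Pi$ being a Baer conic), $\cU$ must contain a point of $\cO_1 \cap \Pi \subset \cT$; the line $N$-to-that-point is tangent to $\cH_1$ and meets $\cT$. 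If instead $N \in \Pi$, a dimension count in $\Pi = \PG(2,q^2)$ shows the Baer subplanes $\Sigma_i \cap \Pi$ (all equal, call it $\Sigma_\Pi$) together with the common trace $\cO_1 \cap \Pi$ force a secant or tangent line of $\cH_1 \cap \Pi$ through $N$ hitting $\cO_1 \cap \Pi$. The remaining case $N \notin \Pi$, $N \notin \bigcup_i \Sigma_i$: the unique line of $\cL$-type through $N$ to $U_2$, or more directly the pencil argument of Lemma~\ref{HermCur} run in reverse, produces the required line.

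The main obstacle I anticipate is the maximality part, and specifically the case analysis for a point $N$ off the union of the Baer subgeometries: one has to match $N$ to the correct Hermitian curve $\cU_r$ in the pencil through $\Pi \cap \cH_1$ (choosing $r$ so that $N \in r^\perp$) and then know that the ovoid traces $\cO_i \cap \Pi$ are positioned so that $\cU_r$ always picks one up. I would handle this by first proving the clean structural fact that $\cO_1 \cap \Pi$ is a Baer-subplane ovoid (an ovoid of a conic, i.e.\ the full conic, in the Baer subplane $\Sigma_\Pi$), which follows since $\cW_1$ induces on $\Pi$ a symplectic/parabolic structure whose generators through any point of $\Pi$ are covered, and then noting that any non-degenerate Hermitian curve of $\Pi$ in the relevant pencil meets this Baer conic — a statement reducible to the elementary fact that a Hermitian curve and a subconic of the corresponding Baer subplane always intersect, via the pencil relation $\cU_r = $ (something)$\cdot(\Pi \cap \cH_1) + (\text{something})\cdot r$ and the point-counting bound $|\cU_r| = q^3+1$ against $|\cO_1 \cap \Pi| = q+1$ inside $|\Pi| = q^4+q^2+1$. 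The rest is bookkeeping with Lemmas~\ref{ovoid}--\ref{tangent}.
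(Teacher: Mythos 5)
The first half of your argument (that $\cT$ is a tangent-set) is sound and follows essentially the paper's route: Lemma~\ref{tangent} forces two points of $\cT$ on a tangent or totally isotropic line of $\cH_1$ onto a single $\Sigma_k$, hence onto a single $\cO_k$, contradicting the (partial) ovoid property. (The paper shortcuts the case $\ell\subset\cH_1$ by observing $\cT\cap\cH_1=\cO_1$, but your version works.)

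The maximality half has genuine gaps. For $N\in\Sigma_k\setminus\Pi$ you abandon the correct argument just before it closes: the lines of $\cW_k$ through $N$ each carry exactly one point of the ovoid $\cO_k$, and their $\F_{q^2}$-extensions \emph{are} tangent to $\cH_1$ when $k\ne 1$ (and contained in $\cH_1$ when $k=1$) --- this is precisely what the computation in the proof of Lemma~\ref{tangent} gives (the tangency condition $\xi_j=\xi_i-a_n$ with $i=j$ reads $a_n=0$, which is the condition for the line to be totally isotropic for $\cH_k$). The substitute you propose --- that the Hermitian curve $\cU$ of Lemma~\ref{HermCur} must contain a point of $\cO_1\cap\Pi$ --- is unjustified and false in general: $\cO_1\cap\Pi$ is a plane section of an ovoid, so it can be a single point (namely when $U_2\in\cO_1$), and even when it is a $(q+1)$-point Baer conic there is no reason a prescribed Hermitian curve of the pencil meets it; the ``elementary fact'' you invoke, that a Hermitian curve of $\PG(2,q^2)$ always meets a conic of a Baer subplane, is not true.

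Two further cases are not covered. You never treat the points $N\notin\Pi$ lying on some $\cH_k$ but outside $\bigcup_i\Sigma_i$; these make up the bulk of $\cH_1$, and the ``line of $\cL$ through $N$'' you appeal to does not exist for a general such $N$ ($\cL$ covers only about $q^4$ points). The paper handles them via Lemma~\ref{ovoid}: such an $N$ lies on a unique extended line of $\cW_k$, which meets the ovoid $\cO_k$ in exactly one point and is again tangent to or contained in $\cH_1$. Finally, for $N\in\Pi\setminus\cH_1$ a ``secant line of $\cH_1\cap\Pi$'' meets $\cH_1$ in $q+1$ points, so it is neither tangent to nor contained in $\cH_1$ and cannot serve as a witness; this case genuinely requires the pencil argument: choose a line $r$ of $\Pi$ with $|r\cap\Sigma_i|=|r\cap\cH_1|=q+1$ and $r\cap\cO_i=\emptyset$, take the Hermitian curve $\cU_r$ of the pencil through $N$, obtain $k$ from Lemma~\ref{HermCur}, and use that $r^\perp\cap\Sigma_k$, being the polar of a line external to $\cO_k$, is secant to $\cO_k$; the subcase $U_2\in\bigcap_i\cO_i$ (where the line $NU_2$ is tangent at $U_2$) must be split off separately. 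As it stands, only the tangent-set claim is proved; the maximality claim needs to be reworked along these lines.
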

\begin{proof} 
Assume by contradiction that $\cT$ is not a tangent-set. If $\ell$ is a line of $\cH_1$ and $|\ell \cap \cT| \ge 2$, then $\ell \cap \cT \subset \cO_1$, since $\cT \cap \cH_1 = \cO_1$. Hence $\ell$ is an extended line of $\cW_1$ containing two points of $\cO_1$, contradicting the assumption that $\cO_1$ is a (partial) ovoid of $\cW_1$. Similarly, if $\ell$ is tangent to $\cH_1$ and $|\ell \cap \cT| \ge 2$, then, by Lemma~\ref{tangent}, there exists a $k$ such that $\ell$ is an extended line of $\cW_k$ and $\ell \cap \cT \subset \cO_k$. Therefore $\cO_k$ is not a (partial) ovoid of $\cW_k$, since $|\ell \cap \cO_k| \ge 2$, contradicting the hypotheses. 

Assume now that $n = 2$ and $\cO_i$ is an ovoid of $\cW_i$, for every $i = 1, \dots, q$. We claim that every point $R$ of $\PG(3, q^2) \setminus \cT$ lies on at least one line that is either tangent or contained in $\cH_1$ and contains one point of $\cT$. If $R \not\in \Pi$, then there exactly one $\cH_k$ with $R \in \cH_k$; hence there is at least one extended line, say $r_k$ of $\cW_k$ containing $R$. Such a line $r_k$ has to contain exactly one point of $\cO_k$ (see Lemma~\ref{ovoid}) and by Lemma~\ref{tangent} it is contained or tangent to $\cH_1$ according as $k = 1$ or $k \ne 1$. Similarly if $R \in \cH_i \cap \Pi$. Let $R$ be a point of $\Pi \setminus \cH_i$. There are two possibilities: either $U_2 \in \cO_i$ for every $i = 1, \dots, q$, and in this case the line joining $R$ and $U_2$ is tangent to $\cH_1$ or $U_2 \not\in \cO_i$, for $i = 1, \dots, q$. If the latter possibility occurs, then there exists a line $r$ of $\Pi$ such that $R \notin r$, $|r \cap \cO_i| = 0$ and $|r \cap \Sigma_i| = |r \cap \cH_1| = q+1$. Let $\cU_r$ be the (unique) non-degenerate Hermitian curve of $\Pi$ belonging to the pencil determined by $r$ and $\Pi \cap \cH_1$ such that $R \in \cU_r$. By Lemma~\ref{HermCur}, there exists a $k$ such that the lines joining a point of $(r^\perp \cap \Sigma_k) \setminus \Pi$ with $R$ are tangent to $\cH_1$. The Baer subline $r^\perp \cap \Sigma_k$ contains two points of $\cO_k$ being the polar line of $r \cap \Sigma_k$ with respect to the symplectic polarity of $\Sigma_k$ associated with $\cW_k$. Therefore the line joining a point of $r^\perp \cap \Sigma_k \cap \cO_k$ with $R$ is tangent to $\cH_1$.  
\end{proof}

\begin{prop}
If there is a (partial) ovoid of $\cW(2n-1, q)$ of size $x+y$, with $y$ points on a hyperplane, then there exists a tangent-set of $\cH(2n-1, q^2)$ of size $x q + y$.
\end{prop}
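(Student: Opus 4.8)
The plan is to deduce the statement as an application of Theorem~\ref{tangent-set}: I will spread $q$ mutually isomorphic copies of the given (partial) ovoid $\cO$ of $\cW(2n-1, q)$, of size $x+y$ with $y$ points on a hyperplane $H$ of $\PG(2n-1,q)$, over the Baer subgeometries $\Sigma_1, \dots, \Sigma_q$, arranging that the $y$ points lying on $H$ are transported to one and the same point set of $\Pi$.

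First I would fix an isomorphism of polar spaces $\phi \colon \cW(2n-1, q) \to \cW_1$. Since $\s$ is non-degenerate and every point of $\PG(2n-1, q)$ is a point of $\cW(2n-1, q)$, the hyperplane $H$ equals $Z^\s$ for a unique point $Z$, while $\Pi \cap \Sigma_1$ is the polar hyperplane of $U_n$ with respect to $\cW_1$. Because the symplectic group is transitive on nonzero vectors, I may post-compose $\phi$ with a collineation preserving $\cW_1$ so as to arrange $\phi(Z) = U_n$, and hence $\phi(H) = \Pi \cap \Sigma_1$.

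Next, for $i = 1, \dots, q$ consider the collineation $g_i$ of $\PG(2n-1, q^2)$ sending $X_n$ to $X_n + \xi_i X_{2n}$ and fixing every other coordinate; since $\xi_1 = 0$, the map $g_1$ is the identity. A short computation — using only that the coefficient $\xi_i^q - \xi_i$ is precisely what distinguishes $\cH_i$ from $\cH_1$ — shows that $g_i$ fixes $\Pi$ pointwise, carries $\Sigma_1$ onto $\Sigma_i$ and $\cH_1$ onto $\cH_i$, and therefore carries $\cW_1$ onto $\cW_i$. I then set $\phi_i = g_i \circ \phi$ and $\cO_i = \phi_i(\cO)$, so that each $\cO_i$ is a (partial) ovoid of $\cW_i$. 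Since $\phi(H) \subseteq \Pi$ is fixed pointwise by every $g_i$, one gets $\cO_i \cap \Pi = \phi_i(\cO \cap H) = \phi(\cO \cap H)$, a set of $y$ points independent of $i$; in particular $\cO_i \cap \Pi = \cO_j \cap \Pi$ for all $i, j$. Theorem~\ref{tangent-set} then applies and yields that $\cT = \bigcup_{i=1}^{q} \cO_i$ is a tangent-set of $\cH_1$, a non-degenerate Hermitian variety $\cH(2n-1, q^2)$.

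Finally, for the cardinality, the $x$ points of $\cO \setminus H$ are sent by $\phi_i$ into $\Sigma_i \setminus \Pi$, and these $q$ subsets are pairwise disjoint: a common point off $\Pi$ would force $\xi_i - \xi_j \in \F_q$, whence $\xi_i^q - \xi_i = \xi_j^q - \xi_j$, contradicting \eqref{all}; meanwhile the images of the $y$ points of $\cO \cap H$ coincide for all $i$ inside $\Pi$. Hence $|\cT| = y + q x$, as claimed. The only delicate point is the compatibility hypothesis of Theorem~\ref{tangent-set}, that the $\cO_i$ agree on $\Pi$; this is exactly what is secured by taking the $g_i$ to fix $\Pi$ pointwise, and everything else is routine bookkeeping.
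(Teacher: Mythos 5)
Your proof is correct and follows exactly the route the paper intends: the proposition is stated there without proof as a direct consequence of Theorem~\ref{tangent-set}, and your construction (an isomorphism onto $\cW_1$ sending the distinguished hyperplane to $\Pi\cap\Sigma_1$, followed by the elations $X_n\mapsto X_n+\xi_i X_{2n}$ carrying $\cW_1$ to $\cW_i$ while fixing $\Pi$ pointwise) supplies precisely the compatible family $\cO_1,\dots,\cO_q$ and the disjointness needed for the count $qx+y$.
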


%

The next result is reached by applying Theorem~\ref{tangent-set}, where $\cO_i$ is either an ovoid of $\cW(3, q)$, $q$ even, or the partial ovoid of $\cW(3, q)$, obtained in Theorem~\ref{partial-symp3}, or the partial ovoids of $\cW(5, q)$ described in Theorem~\ref{partial-symp5} and in Theorem~\ref{partial-symp5-even}, or the partial ovoid of $\cW(7, q)$ of size $q^3+1$ given in \cite{C, CK}.  

\begin{cor}
The following hold true.
\begin{itemize}
\item If $q$ is even, there exist maximal tangent-sets of $\cH(3, q^2)$ of size $q^3+1$ and of size $q^3-q^2+q+1$. 
\item If $q$ is an odd square and $q \not\equiv 0 \pmod{3}$, there exists a tangent-set of $\cH(3, q^2)$ of size $(q^{5/2}+3q^2-q^{3/2}+3q)/3$. 
\item If $q$ is even, there exists a tangent-set of $\cH(5, q^2)$ of size $2q^3-q^2+1$. 
\item There exists a tangent-set of $\cH(5, q^2)$ of size $q^3+q^2+1$. 
\item There exists a tangent-set of $\cH(7, q^2)$ of size $q^4+1$. 
\end{itemize}
\end{cor}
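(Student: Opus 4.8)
\medskip

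The plan is to produce the five tangent-sets by feeding the (partial) ovoids of $\cW(2n-1,q)$ listed in the paragraph preceding the statement into the Proposition just above --- and, for the two maximal cases, into the construction of Theorem~\ref{tangent-set} itself. Recall that that Proposition turns a partial ovoid $\cO$ of $\cW(2n-1,q)$ with $y=|\cO\cap H|$ points on a hyperplane $H$ into a tangent-set of $\cH(2n-1,q^2)$ of size $q(|\cO|-y)+y$, which by Theorem~\ref{tangent-set} is maximal whenever $n=2$ and $\cO$ is an ovoid. Thus for each item the work splits into (i) comparing the claimed cardinality with $q(|\cO|-y)+y$ to read off the required $y$ --- pure arithmetic --- and (ii) exhibiting a hyperplane $H$ of $\PG(2n-1,q)$ with $|\cO\cap H|=y$.

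For $\cH(3,q^2)$ with $q$ even, take $\cO$ an ovoid of $\cW(3,q)$ ($|\cO|=q^2+1$): a tangent plane gives $y=1$, hence size $q^3+1$, while a secant plane cuts the ovoid --- an elliptic quadric, say --- in a conic of $q+1$ points, giving $y=q+1$ and size $q(q^2-q)+(q+1)=q^3-q^2+q+1$; since $n=2$ and the construction of Theorem~\ref{tangent-set} can be carried out with every $\cO_i$ an elliptic quadric of $\cW_i$ sharing the prescribed point, resp.\ conic, as its intersection with $\Pi$, both tangent-sets are maximal. For $\cH(5,q^2)$ with $q$ even, take $\cO=\cA\cup(\cE_1\setminus\sigma)$ of Theorem~\ref{partial-symp5-even} ($|\cO|=2q^2-q+1$): the hyperplane $X_1=0$ meets $\cA$ only in $U_2$ and misses $\cE_1\setminus\sigma$ entirely --- the only point of $\cE_1$ with $X_1=0$ is $U_2$, and $U_2\in\sigma$, because $X^2+X+\delta_1$ is irreducible over $\F_q$ --- so $y=1$ and the size is $q(2q^2-q)+1=2q^3-q^2+1$. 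For $\cH(5,q^2)$ in general, take $\cO=P_{1,c}^K$ of Theorem~\ref{partial-symp5} ($|\cO|=q^2+q+1$): using the identity $T(x^{-1}-x)=N(1-x)$ valid for $N(x)=1$ --- this is exactly the computation in the proof of Theorem~\ref{partial-symp5} --- the hyperplane $\{P_{a,b}\mid c\,T(a)=T(b)\}$ of $\PG(V)$ contains $P_{x,cx^{-1}}$ precisely when $N(1-x)=0$, i.e.\ only $P_{1,c}$; hence $y=1$ and the size is $q(q^2+q)+1=q^3+q^2+1$. For $\cH(7,q^2)$, take the partial ovoid of $\cW(7,q)$ of size $q^3+1$ of \cite{C,CK}; a hyperplane meeting it in a single point gives $y=1$ and size $q\cdot q^3+1=q^4+1$.

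The one item where step (ii) is not immediate is $\cH(3,q^2)$ with $q$ an odd square, $q\not\equiv0\pmod{3}$: here $\cO=\cO_\epsilon\cup\cC$ of Theorem~\ref{partial-symp3} and the claimed size equals $q\,|\cO|$, so $y=0$ is needed, i.e.\ a plane of $\PG(3,q)$ disjoint from $\cO_\epsilon\cup\cC$. For $\epsilon=1$ I would take the plane $\pi\colon X_1+\lambda X_2+\mu X_3+\nu X_4=0$ with $\lambda,\mu,\nu\in\F_q$ chosen so that $\nu t^{3}+\mu t^{2}-3\lambda t+1$ is irreducible over $\F_q$ (possible for every $q$ in the range). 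Irreducibility forces $\nu t^{3}+\mu t^{2}-3\lambda t+1\neq0$ for all $t\in\F_q$, so $\pi$ contains no point $P_t$, and $U_4\notin\pi$ since $\nu\neq0$; thus $\pi\cap\cC=\emptyset$. A point of $\cO_1$ lies on $\pi$ exactly when $F(a,c)+x\,F(b,d)=0$ with $F(X,Y)=X^3-3\lambda X^2Y+\mu XY^2+\nu Y^3$; as $a,b,c,d\in\F_{\sqrt q}$ while $x\notin\F_{\sqrt q}$, this would require $F(b,d)=F(a,c)=0$, and since $F(t,1)$ is the reciprocal polynomial of $\nu t^{3}+\mu t^{2}-3\lambda t+1$ --- hence also irreducible --- it forces $(a,b,c,d)=0$, contradicting $ad-bc\neq0$. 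For $\epsilon=-1$ the same plane should do, but the argument must in addition keep track of the defining relations $ab^{\sqrt q}=cd^{\sqrt q}$ and $a^{\sqrt q+1}+b^{\sqrt q+1}=c^{\sqrt q+1}+d^{\sqrt q+1}$, and use that $x$ --- and, since here $q\equiv1\pmod{6}$, also $-x$ and $-x^{-1}$ --- is a non-cube in $\F_q$; a short case distinction then again excludes every point of $\cO_{-1}$ from $\pi$, after possibly restricting $\nu$ to a suitable cube class, which is compatible with irreducibility.

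I expect this $\epsilon=-1$ bookkeeping, rather than anything conceptual, to be the main obstacle; similarly, making step (ii) explicit in the $\cH(7,q^2)$ item requires unwinding the construction of \cite{C,CK} to locate the single-point hyperplane, but this should go through by an argument parallel to the $\cW(5,q)$ case above. With the hyperplanes of the preceding paragraphs in hand, the Corollary becomes an immediate application of the Proposition above (and, for the first two sizes, of Theorem~\ref{tangent-set}).
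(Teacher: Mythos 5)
Your overall strategy is exactly the paper's: the corollary is obtained by feeding the listed (partial) ovoids of $\cW(2n-1,q)$ into Theorem~\ref{tangent-set} via the Proposition, and the whole content of each item is the arithmetic $qx+y$ together with the exhibition of a hyperplane meeting the (partial) ovoid in $y$ points. Two remarks on the $y=1$ items. First, you are working harder than necessary: for \emph{any} partial ovoid $\cO$ of $\cW(2n-1,q)$ and any $P\in\cO$, the polar hyperplane $P^{\s}$ meets $\cO$ only in $P$, since $Q\in P^{\s}$ would make $PQ$ a totally isotropic line, hence a line lying in a generator through two points of $\cO$. This disposes of $y=1$ uniformly in items one, three, four \emph{and} five; in particular there is nothing to "unwind" in the construction of \cite{C, CK}, and your ad hoc hyperplanes ($X_1=0$ is $U_2^{\s}$, and $cT(a)=T(b)$ is $P_{1,c}^{\s}$) are instances of this. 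Second, in your plane for $\epsilon=1$ you must take $\lambda,\mu,\nu\in\F_{\sqrt{q}}$, not merely in $\F_q$: the step "$F(a,c)+xF(b,d)=0$ forces $F(a,c)=F(b,d)=0$" uses that $F(a,c),F(b,d)\in\F_{\sqrt{q}}$ while $x\notin\F_{\sqrt{q}}$, which fails for general $\lambda,\mu,\nu\in\F_q$. This is repairable: a cubic irreducible over $\F_{\sqrt{q}}$ has its roots in $\F_{(\sqrt{q})^3}$, which meets $\F_q=\F_{(\sqrt{q})^2}$ and $\F_{q^2}$ only in $\F_{\sqrt{q}}$, so it stays irreducible over $\F_q$ and the plane still misses all of $\cC$.

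The genuine gap is the case $\sqrt{q}\equiv-1\pmod 3$ of the second item. There the claimed size $(q^{5/2}+3q^2-q^{3/2}+3q)/3=q\,|\cO_{-1}\cup\cC|$ forces $y=0$, so one must produce a plane of $\PG(3,q)$ disjoint from $\cO_{-1}\cup\cC$, and this cannot be waved away by a counting argument (a plane meets a set of size $\approx q^{3/2}/3$ in $\approx q^{1/2}/3$ points on average, so most planes fail). Your $\epsilon=1$ mechanism does not transfer: the parameters $a,b,c,d$ defining $\cO_{-1}$ range over $\F_q$ subject to $ab^{\sqrt{q}}=cd^{\sqrt{q}}$ and $a^{\sqrt{q}+1}+b^{\sqrt{q}+1}=c^{\sqrt{q}+1}+d^{\sqrt{q}+1}$, so $F(a,c)$ and $F(b,d)$ lie in $\F_q$ and the $\F_{\sqrt{q}}$-linear-independence of $\{1,x\}$ gives nothing; the single equation $F(a,c)+xF(b,d)=0$ in $\F_q$ no longer splits into two. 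The "short case distinction \dots after possibly restricting $\nu$ to a suitable cube class" is precisely the nontrivial step of this item and is not supplied; as it stands, this bullet of the corollary is not proved by your argument. (To be fair, the paper itself asserts this item with no more justification than a pointer to Theorem~\ref{tangent-set}, so the missing verification is a gap you have inherited and made visible rather than one you introduced; but your write-up should either carry out that case distinction or find a plane adapted to the Baer subgeometry $\Lambda_{-1}$ for which the exclusion of $\cO_{-1}$ is transparent.)
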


\subsection{Partial ovoids of $\cH(2n, q^2)$ from tangent-sets of $\cH(2n-1, q^2)$, $n \in \{2, 3, 4\}$}

The next result shows that starting from a (maximal) tangent-set of $\cH(2n-1, q^2)$ it is possible to obtain a (maximal) partial ovoid of $\cH(2n, q^2)$.

\begin{lemma}\label{partial-herm}
Let $P$ be a point of $\PG(2n, q^2) \setminus \cH(2n, q^2)$ and let $\cT$ be a (maximal) tangent-set of $\cH(2n-1, q^2) = P^\perp \cap \cH(2n, q^2)$. Then 
\begin{align*}
& \cO = \left( \bigcup_{R \in \cT} \langle P, R \rangle \right) \cap \cH(2n, q^2)
\end{align*}
is a (maximal) partial ovoid of $\cH(2n, q^2)$ of size $(q+1) |\cT \setminus \cH(2n-1, q^2)| +  |\cT \cap \cH(2n-1, q^2)|$.
\end{lemma}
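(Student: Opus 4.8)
The plan is to analyze the structure of the cone with vertex $P$ over the tangent-set $\cT$, and to verify the two defining properties of a partial ovoid of $\cH(2n, q^2)$: that $\cO$ consists of points of the Hermitian variety, and that every generator of $\cH(2n, q^2)$ meets $\cO$ in at most one point. First I would fix $P \notin \cH(2n, q^2)$, so that $P^\perp$ is a secant hyperplane meeting $\cH(2n, q^2)$ in the non-degenerate Hermitian variety $\cH(2n-1, q^2)$ of the statement, and recall that for each point $R \in P^\perp$ the line $\langle P, R\rangle$ is not contained in $\cH(2n, q^2)$ (since $P \notin \cH(2n, q^2)$) and hence meets $\cH(2n, q^2)$ in either $1$ or $q+1$ points. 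The key elementary fact is the case analysis on $R$: if $R \in \cH(2n-1, q^2)$ then $R \in R^\perp$, and since $P \in R^\perp$ would force $R \in P^\perp \cap \cH$ with $\langle P,R\rangle$ a line through $R$ in the tangent hyperplane — one checks instead directly that $\langle P, R\rangle$ is tangent to $\cH(2n,q^2)$ at $R$, contributing exactly the one point $R$; whereas if $R \notin \cH(2n-1, q^2)$, then $R \notin \cH(2n, q^2)$ and $\langle P, R\rangle$ is a secant line meeting $\cH(2n, q^2)$ in exactly $q+1$ points, none of which equals $R$. This already gives the size count: $(q+1)|\cT \setminus \cH(2n-1,q^2)| + |\cT \cap \cH(2n-1,q^2)|$, provided the lines $\langle P, R\rangle$ for distinct $R \in \cT$ are distinct, which holds because $\cT \subset P^\perp$ and $P \notin P^\perp$, so $R \mapsto \langle P, R\rangle$ is injective on $P^\perp$.

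Next I would prove that $\cO$ is a partial ovoid. Let $g$ be a generator of $\cH(2n, q^2)$ and suppose $|g \cap \cO| \ge 2$, say two distinct points $Q_1, Q_2 \in g \cap \cO$ lie on lines $\langle P, R_1\rangle$ and $\langle P, R_2\rangle$ with $R_1, R_2 \in \cT$. Since $g$ is totally isotropic, the line $\langle Q_1, Q_2\rangle$ is contained in $\cH(2n, q^2)$, so in particular $Q_1^\perp \supseteq g \ni Q_2$, i.e. $Q_1 \perp Q_2$. The plane (or line) $\pi = \langle P, R_1, R_2\rangle = \langle P, Q_1, Q_2\rangle$ contains $P$; I would argue that the line $m = \langle R_1, R_2 \rangle \subseteq P^\perp$ is the "trace" of $\pi$ on $P^\perp$, and that $m$ is either tangent to or contained in $\cH(2n-1, q^2)$. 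Indeed, $\pi$ is a plane through $P \notin \cH$ containing the line $\langle Q_1, Q_2\rangle \subseteq \cH(2n,q^2)$; intersecting $\pi$ with $\cH(2n,q^2)$ gives a plane-Hermitian-curve configuration which, since $P\notin\cH$ and $\langle Q_1,Q_2\rangle\subseteq\cH$, must be degenerate — a pair of lines or a line counted with multiplicity — forcing the intersection of $m = \pi \cap P^\perp$ with $\cH(2n-1, q^2) = \pi \cap P^\perp \cap \cH(2n,q^2)$ to have $1$ or $q+1$ points, i.e. $m$ is tangent to or lies on $\cH(2n-1, q^2)$. But $R_1, R_2$ are two distinct points of $\cT$ on $m$, contradicting the tangent-set property of $\cT$. (The slightly delicate sub-case is when $\pi$ is actually a line, i.e. $P, R_1, R_2$ collinear; but then $R_1, R_2, Q_1, Q_2$ all lie on a single line through $P$, which meets $\cH(2n,q^2)$ in $\le q+1$ points of a single line, forcing $Q_1 = R_1$ or the line to be secant with $m = $ that line meeting $\cH(2n-1,q^2)$ in $q+1$ points — again $m$ is secant hence not tangent, but it still has $q+1$ points of $\cH(2n-1,q^2)$ and we use that $R_1\ne R_2$ lie on a tangent-or-contained line of $\cH(2n-1,q^2)$; one checks this forces the line $\langle P,R_1\rangle$ to be tangent to $\cH(2n,q^2)$, so $Q_1=R_1, Q_2=R_2$, and then $R_1\perp R_2$ gives a line of $\cH(2n-1,q^2)$ through both.)

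Finally, for the maximality statement, assume $\cT$ is a maximal tangent-set and suppose $\cO$ were not maximal, so there is a point $Q^* \in \cH(2n, q^2) \setminus \cO$ with $\cO \cup \{Q^*\}$ still a partial ovoid. Let $R^* = \langle P, Q^*\rangle \cap P^\perp$, the projection of $Q^*$ from $P$ onto the secant hyperplane; note $R^* \notin \cT$ because otherwise $Q^* \in \langle P, R^*\rangle \cap \cH(2n,q^2) \subseteq \cO$. By maximality of $\cT$, the set $\cT \cup \{R^*\}$ is not a tangent-set, so there is a line $m$ tangent to or contained in $\cH(2n-1, q^2)$ with $|m \cap (\cT \cup \{R^*\})| \ge 2$; since $\cT$ itself is a tangent-set, this forces $R^* \in m$ and some $R' \in \cT \cap m$ with $R' \ne R^*$. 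Now I would lift back: the plane $\langle P, m\rangle$ contains $P$, $R^*$, $R'$, $Q^*$ and a point $Q' \in \langle P, R'\rangle \cap \cO$, and — reversing the argument of the previous paragraph — because $m$ is tangent to or contained in $\cH(2n-1, q^2)$, the line $\langle Q^*, Q'\rangle$ (suitably chosen among the points of $\cO$ on $\langle P,R'\rangle$) is contained in $\cH(2n, q^2)$, hence extends to a generator $g \ni Q^*, Q'$, contradicting that $\cO \cup \{Q^*\}$ is a partial ovoid. The main obstacle I anticipate is handling cleanly the projection-and-lift correspondence between lines of $P^\perp$ through a projected point and planes of $\PG(2n,q^2)$ through $P$ — specifically, making precise the claim that a plane through $P$ meeting $\cH(2n,q^2)$ in a line corresponds exactly to a tangent-or-contained line of $\cH(2n-1,q^2) = P^\perp \cap \cH(2n,q^2)$, and conversely; and in the maximality direction, choosing the right point $Q'$ among the $q+1$ points of a secant line so that $\langle Q^*, Q'\rangle$ genuinely lies in $\cH(2n, q^2)$ rather than merely passing through two of its points. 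Both reduce to the standard fact that a plane section of a non-degenerate Hermitian variety through an external point is a non-degenerate Hermitian curve, a degenerate one (a point, a line, or a pair of Baer-conjugate lines), and tracking which case gives which intersection pattern with the polar hyperplane.
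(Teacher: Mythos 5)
Your proof follows the same projection-from-$P$ strategy as the paper's and is correct: both directions reduce to the fact that a line contained in (or tangent to) $\cH(2n,q^2)$ and not through $P$ projects onto a line of $P^\perp$ that is tangent to or contained in $\cH(2n-1,q^2)$, and conversely, which you rightly trace back to the classification of degenerate plane sections through the external point $P$. Two minor slips worth fixing: a line $m$ \emph{contained} in $\cH(2n-1,q^2)$ has $q^2+1$ points of it, not $q+1$; and your ``delicate sub-case'' of $P, R_1, R_2$ collinear is vacuous, since a line joining $P$ to two distinct points of $P^\perp$ would lie in $P^\perp$ and force $P\in P^\perp$.
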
 
\begin{proof}
Assume by contradiction that $\cO$ is not a partial ovoid, then there would exist a line $\ell$ of $\cH(2n, q^2)$ containing two distinct points $P_1$ and $P_2$ of $\cO$. If at least one among $P_1, P_2$ is not in $P^\perp$, then the line $\bar{\ell} \subset P^\perp$ obtained by projecting $\ell$ from $P$ to $P^\perp$ is tangent to $\cH(2n-1, q^2) = P^\perp \cap \cH(2n, q^2)$ and contains two points of $\cT$, a contradiction. If both $P_1$ and $P_2$ are in $P^\perp$, then $\ell$ is a line of $\cH(2n-1, q^2)$ and contains two points of $\cT$, a contradiction. 

As for maximality, note that if $T$ is a point of $\cH(2n, q^2)$ such that $\cO \cup \{T\}$ is a partial ovoid of $\cH(2n, q^2)$ and $\bar{T} = \langle P, T \rangle \cap P^\perp$, then $\cT \cup \{\bar{T}\}$ is a tangent-set of $\cH(2n-1, q^2)$. 
\end{proof}

Combining Theorem~\ref{tangent-set} and Lemma~\ref{partial-herm}, the following result arises. 

\begin{theorem}
If there is a (partial) ovoid of $\cW(2n-1, q)$ of size $x+y$, with $y$ points on a hyperplane, then there exists a partial ovoid of $\cH(2n, q^2)$ of size $x q^2 + y$.
\end{theorem}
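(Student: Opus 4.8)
The plan is to combine the two construction machineries already established in the excerpt, namely Theorem~\ref{tangent-set} (which turns a family of partial ovoids of the $\cW_i$'s into a tangent-set of a Hermitian variety $\cH(2n-1,q^2)$) and Lemma~\ref{partial-herm} (which turns a tangent-set of $\cH(2n-1,q^2)$ into a partial ovoid of $\cH(2n,q^2)$), and to track the size bookkeeping carefully. The one subtlety is that Theorem~\ref{tangent-set} does not take a single partial ovoid of $\cW(2n-1,q)$ as input: it takes $q$ partial ovoids $\cO_1,\dots,\cO_q$ of the $q$ polar spaces $\cW_i$ sitting in the Baer subgeometries $\Sigma_i$, required to agree on the common hyperplane $\Pi$. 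So the first step is to explain why a single partial ovoid $\cO$ of $\cW(2n-1,q)$ of size $x+y$, with $y$ of its points on a hyperplane $H$ and $x$ points off $H$, yields such a family.

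The key observation for that step is that each $\Sigma_i$ is a Baer subgeometry whose symplectic subspace $\cW_i$ is projectively equivalent to $\cW(2n-1,q)$, that $\Pi \cap \Sigma_i$ is a common hyperplane for all $i$ (it is literally the same pointset, the $\F_q$-rational points of $X_{2n}=0$), and that $\Sigma_i \setminus \Pi$ is an affine chart $K$-equivalent for each $i$ to a fixed affine chart of $\cW(2n-1,q)$. Concretely, identify $\cW(2n-1,q)$ with $\cW_1$; put the $y$ hyperplane points of $\cO$ onto $\Pi$ and the $x$ affine points into $\Sigma_1 \setminus \Pi$, giving $\cO_1$. For $i \ge 2$, take $\cO_i$ to consist of the \emph{same} $y$ points of $\Pi$ together with the image of the $x$ affine points of $\cO_1$ under a projectivity carrying $\Sigma_1 \setminus \Pi$ to $\Sigma_i \setminus \Pi$ and $\cW_1$ to $\cW_i$ (the groups $G_i \simeq \PSp(2n,q)$ stabilizing the $\cW_i$ are conjugate, and since $\Pi$ is fixed setwise one can choose the conjugating map to fix $\Pi$ pointwise, or at least to fix the chosen $\cO_i \cap \Pi$). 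Then each $\cO_i$ is a partial ovoid of $\cW_i$ of size $x+y$ with $\cO_i \cap \Pi = \cO_j \cap \Pi$, exactly the hypothesis of Theorem~\ref{tangent-set}.

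Next I would apply Theorem~\ref{tangent-set}: $\cT = \bigcup_{i=1}^q \cO_i$ is a tangent-set of $\cH_1 = \cH(2n-1,q^2)$. Its size is $q$ copies of the $x$ affine points (all distinct, since the $\Sigma_i \setminus \Pi$ are pairwise disjoint — indeed $|\cH_1 \cap (\Sigma_i \setminus \Pi)| = 0$ for $i \ne 1$, and more directly the base locus of the pencil lies in $\Pi$) plus one shared copy of the $y$ points on $\Pi$, so $|\cT| = xq + y$, with $|\cT \cap \cH(2n-1,q^2)|$ equal to... here one must be slightly careful: the $y$ points on $\Pi$ need not all lie on $\cH_1$. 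However, Lemma~\ref{partial-herm} only needs the split of $\cT$ into its part on $\cH(2n-1,q^2)$ and its part off, and the total count $xq+y$ is what ultimately matters modulo that split. Finally, pick a point $P \in \PG(2n,q^2) \setminus \cH(2n,q^2)$ whose polar hyperplane $P^\perp$ meets $\cH(2n,q^2)$ in the chosen $\cH(2n-1,q^2)$ containing $\cT$, and invoke Lemma~\ref{partial-herm}: $\cO = \big(\bigcup_{R \in \cT}\langle P, R\rangle\big) \cap \cH(2n,q^2)$ is a partial ovoid of $\cH(2n,q^2)$ of size $(q+1)|\cT \setminus \cH(2n-1,q^2)| + |\cT \cap \cH(2n-1,q^2)|$.

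It remains to see this equals $xq^2 + y$. The $xq$ affine points of $\cT$ (the $\cO_i \setminus \Pi$) lie in the Baer subgeometries $\Sigma_i$, hence off $\cH_1$ for $i \ne 1$; and for $i=1$ the affine points of $\cO_1$ are points of $\cW_1 \subset \cH_1$, so they \emph{are} on $\cH(2n-1,q^2)$. Thus of the $xq$ affine points, $x$ lie on $\cH(2n-1,q^2)$ (those from $\cO_1$) and $x(q-1)$ lie off it; of the $y$ hyperplane points, say $y'$ lie on $\cH(2n-1,q^2)$ and $y-y'$ off. Then Lemma~\ref{partial-herm} gives size $(q+1)\big(x(q-1) + (y-y')\big) + \big(x + y'\big) = x(q+1)(q-1) + x + (q+1)(y-y') + y' = x(q^2-1) + x + (q+1)y - qy' = xq^2 + (q+1)y - qy'$. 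For this to be $xq^2+y$ one needs $(q+1)y - qy' = y$, i.e. $qy = qy'$, i.e. $y = y'$: \emph{all} $y$ hyperplane points of the original partial ovoid must lie on $\cH(2n-1,q^2)$. The resolution — and this is the one genuine point to get right rather than the main obstacle, which is otherwise just bookkeeping — is that in the construction the hyperplane $H$ of $\cW(2n-1,q)$ is taken to be (the $\F_q$-points of) $\Pi$, and $\Pi \cap \cH_i = \Pi \cap \Sigma_i$ for every $i$ since $\Sigma_i \subset \cH_i$ and $\Pi \cap \Sigma_i$ is a full Baer hyperplane; hence every point of $\Pi$ that lies in $\Sigma_i$ — in particular every point of $\cO_i \cap \Pi$, which is a pointset of $\cW_i \subset \Sigma_i$ — automatically lies on $\cH_i$, and for $i=1$ on $\cH_1 = \cH(2n-1,q^2)$. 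So $y' = y$ and the size is $xq^2 + y$, as claimed. Wrapping up, one just notes that all the applications in the surrounding corollaries (ovoids of $\cW(3,q)$ with $q$ even, the partial ovoids of $\cW(3,q)$, $\cW(5,q)$, and $\cW(7,q)$ produced earlier) have their ``hyperplane part'' situated precisely as this construction requires, so the stated partial ovoids of $\cH(4,q^2)$, $\cH(6,q^2)$, $\cH(8,q^2)$ follow immediately.
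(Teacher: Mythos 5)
Your proof is correct and follows exactly the route the paper intends: the paper's entire proof is the single sentence that the result follows by combining Theorem~\ref{tangent-set} with Lemma~\ref{partial-herm}, and you carry out precisely that combination. Your extra bookkeeping --- in particular the observation that the $y$ hyperplane points sit in $\Pi\cap\Sigma_1\subset\cH_1$, so that $y'=y$ and the count $(q+1)\,x(q-1)+(x+y)$ closes to $xq^2+y$ --- supplies details the paper leaves implicit, and it is all accurate.
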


Hence starting from (partial) ovoids of $\cW(2n-1, q)$, we get partial ovoids of $\cH(2n, q^2)$, $n \in \{2, 3, 4\}$, as described below.

\begin{cor}
The following hold true.
\begin{itemize}
\item If $q$ is even, there exist maximal partial ovoids of $\cH(4, q^2)$ of size $q^4+1$ and of size $q^4-q^3+q+1$.
\item If $q$ is an odd square and $q \not\equiv 0 \pmod{3}$, there exists a partial ovoid of $\cH(4, q^2)$ of size $(q^{7/2}+3q^3-q^{5/2}+3q^2)/3$. 
\item If $q$ is even, there exists a partial ovoid of $\cH(6, q^2)$ of size $2q^4-q^3+1$. 
\item There exists a partial ovoid of $\cH(6, q^2)$ of size $q^4+q^3+1$. 
\item There exists a partial ovoid of $\cH(8, q^2)$ of size $q^5+1$. 
\end{itemize}
\end{cor}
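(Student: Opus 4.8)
The plan is to derive this Corollary from the Theorem immediately preceding it, by feeding into that Theorem the (partial) ovoids of $\cW(2n-1,q)$ recalled just above the statement, and to obtain the maximality assertions from Lemma~\ref{partial-herm} together with the maximality part of Theorem~\ref{tangent-set}. Recall that the combined statement reads: a (partial) ovoid of $\cW(2n-1,q)$ of size $x+y$ having exactly $y$ points on a hyperplane yields a partial ovoid of $\cH(2n,q^2)$ of size $xq^2+y$; moreover, if $n=2$ and one starts from a genuine ovoid, then (via Theorem~\ref{tangent-set} and Lemma~\ref{partial-herm}) the resulting partial ovoid is maximal. So for each assertion it suffices to exhibit a (partial) ovoid of the appropriate $\cW(2n-1,q)$ and to determine the number $y$ of its points lying on a suitably chosen hyperplane $\Pi$, checking that $xq^2+y$ matches the claimed value, and — in the $n=2$ ovoid cases — that the $q$ translates $\cO_i$ used in Theorem~\ref{tangent-set} can be taken with the common trace $\cO_i\cap\Pi$.

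First I would treat $\cH(4,q^2)$ with $q$ even. Here $\cW(3,q)$ has ovoids of size $q^2+1$ (elliptic quadrics, and Suzuki--Tits ovoids when $q=2^{2h+1}$). A tangent plane to such an ovoid meets it in $1$ point, so $y=1$, $x=q^2$ and $xq^2+y=q^4+1$; a plane meeting the ovoid in a conic gives $y=q+1$, $x=q^2-q$ and $xq^2+y=q^4-q^3+q+1$. Since the input is a full ovoid and $n=2$, Theorem~\ref{tangent-set} produces a maximal tangent-set and Lemma~\ref{partial-herm} a maximal partial ovoid, which settles both maximality claims. For $\cH(4,q^2)$ with $q$ an odd square, $q\not\equiv 0\pmod 3$, I would feed in the partial ovoid $\cO_\epsilon\cup\cC$ of $\cW(3,q)$ from Theorem~\ref{partial-symp3}, of size $s:=(q^{3/2}+3q-q^{1/2}+3)/3$; a direct coordinate check shows it is disjoint from a suitable plane, so $y=0$, $x=s$, and $xq^2=(q^{7/2}+3q^3-q^{5/2}+3q^2)/3$, as claimed (no maximality is asserted, consistent with the input being only a partial ovoid).

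The remaining three cases are analogous. For $\cH(6,q^2)$ with $q$ even, use the partial ovoid $\cA\cup(\cE_1\setminus\sigma)$ of $\cW(5,q)$ from Theorem~\ref{partial-symp5-even}, of size $2q^2-q+1$; positioning it so that exactly one of its points lies on $\Pi$ gives $y=1$, $x=2q^2-q$ and $xq^2+y=2q^4-q^3+1$. For $\cH(6,q^2)$ (any $q$), use the partial ovoid $\cO=P_{1,c}^K$ of $\cW(5,q)$ from Theorem~\ref{partial-symp5}, of size $q^2+q+1$; again with $y=1$ one gets $x=q^2+q$ and $xq^2+y=q^4+q^3+1$. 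For $\cH(8,q^2)$, use the partial ovoid of $\cW(7,q)$ of size $q^3+1$ from \cite{C,CK}; with $y=1$ one gets $x=q^3$ and $xq^2+y=q^5+1$. In these last three cases $n\ge 3$, so the maximality part of Theorem~\ref{tangent-set} does not apply and maximality is not claimed.

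The only real content beyond invoking the preceding Theorem is the verification, in each case, of the exact number $y$ of points of the chosen (partial) ovoid on the selected hyperplane, and — when Theorem~\ref{tangent-set} is applied with $q$ translates — of the compatibility condition $\cO_i\cap\Pi=\cO_j\cap\Pi$. I expect this to be the main obstacle. For the $q$-even ovoids of $\cW(3,q)$ it follows from the classical description of their plane sections (in particular, for $q$ even all tangents to a conic pass through its nucleus, which is precisely what permits a common conic trace on the polar plane $\Pi$); for the partial ovoids coming from Theorems~\ref{partial-symp3}, \ref{partial-symp5} and \ref{partial-symp5-even} it is a routine computation with the explicit coordinates given there; and for the $\cW(7,q)$ example it follows from the description in \cite{C,CK}.
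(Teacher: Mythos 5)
Your proposal is correct and follows essentially the same route as the paper: the corollary is obtained by feeding the ovoids of $\cW(3,q)$, $q$ even, the partial ovoids of Theorems~\ref{partial-symp3}, \ref{partial-symp5}, \ref{partial-symp5-even} and the $\cW(7,q)$ example of \cite{C,CK} into the theorem giving a partial ovoid of $\cH(2n,q^2)$ of size $xq^2+y$, with maximality in the $n=2$ ovoid cases coming from Theorem~\ref{tangent-set} and Lemma~\ref{partial-herm}. Your bookkeeping of $x$ and $y$ in each case matches the sizes implicit in the paper, and you are if anything more explicit than the paper about the choice of the hyperplane $\Pi$ and the compatibility condition $\cO_i\cap\Pi=\cO_j\cap\Pi$.
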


\smallskip
{\footnotesize
\noindent\textit{Acknowledgments.}
This work was supported by the Italian National Group for Algebraic and Geometric Structures and their Applications (GNSAGA-- INdAM).
}

\end{document}